\newtheorem{theorem}{Theorem}
\newtheorem{lemma}{Lemma}
\newtheorem{definition}{Definition}
\newtheorem{assumption}{Assumption}
\newtheorem{property}{Property}
\newtheorem{remark}{Remark}
\newtheorem{corollary}{Corollary}
\newcommand{\an}[1]{{\color{black}#1}}
\newcommand{\fy}[1]{{\color{black}#1}}
\newcommand{\us}[1]{{\color{black}#1}}
 \newcommand{\remove}[1]{}
\newcommand{\EXP}[1]{\mathsf{E}\!\left[#1\right] }
\def\sF{\mathcal{F}}
\def\Real{\mathbb{R}}
\def\g{\gamma}
\def\e{\epsilon}
\def\a{\alpha}
\title{\LARGE \bf
Stochastic quasi-Newton methods for
 non-strongly convex problems: convergence and rate analysis
}
\author{Farzad Yousefian$^{1}$ and Angelia Nedi\'c$^{2}$ and Uday V. Shanbhag$^{3}$
\thanks{$^{1}$Assistant Professor, School of Industrial Engineering \& Management, Oklahoma State University, Stillwater, OK 74078, USA
        {\tt\small farzad.yousefian@okstate.edu}}%
\thanks{$^{2}$Associate Professor, Industrial \& Enterprise Systems Engineering, University of Illinois at Urbana-Champaign, Urbana, IL 61801, USA
        {\tt\small angelia@illinois.edu}}
\thanks{$^{3}$Associate Professor, Industrial \& Manufacturing Engineering, Pennsylvania State University,  University Park, PA 16802, USA
        {\tt\small udaybag@psu.edu}}%
}
\begin{document}

\maketitle
\thispagestyle{empty}
\pagestyle{empty}

\begin{abstract}
Motivated by applications in optimization and machine learning, we consider
stochastic quasi-Newton (SQN) methods for solving stochastic optimization problems. In
the literature, the convergence analysis of these algorithms relies on strong convexity of the
objective function. To our knowledge, no theoretical analysis is provided for the rate statements in
the absence of this assumption. Motivated by this gap, we allow the objective function to be
merely convex and we develop a cyclic regularized SQN method where the gradient mapping
and the Hessian approximation matrix are both regularized at each iteration and are updated
in a cyclic manner. We show that, under suitable assumptions on the stepsize and regularization parameters, the objective function value converges to the optimal objective function of the original problem in both almost sure and the expected senses. For each case,
a class of feasible sequences that guarantees the convergence is provided. Moreover, the rate
of convergence in terms of the objective function value is derived. Our empirical analysis on a binary classification problem shows that the proposed scheme performs well compared to both classic regularization SQN schemes and stochastic approximation method.
\end{abstract}

\section{Introduction}
 In this paper, we study a stochastic optimization problem of the form:
\begin{equation}\label{eqn:problem}
\min_{x \in \Real^n} f(x):=\EXP{F(x,\xi(\us{\omega}))},
\end{equation}
where $F: \Real^n\times\us{\Real^d}\to\mathbb{R}$ is a function, the random
vector $\xi$ is defined as {$\xi:\Omega
\rightarrow \Real^d$}, \us{$(\Omega,{\cal F}, \mathbb{P})$ denotes the
associated probability space and the expectation \us{$\EXP{F(x,\xi)}$} is
taken with respect to $\mathbb{P}$}. A variety of applications can be cast as the model \eqref{eqn:problem} (see \cite{Spall03,Ermoliev88,borkar00ode,facchinei02finite}).
An important example in machine learning is the support vector machine
(SVM) problem (\cite{bottou-2010,nocedal15,Tib96}). In such problems, a training set containing a large
number of input/output pairs $\{(u_i,v_i)\}_{i=1}^N\in \Real^{m}\times
\Real$ is given where $v_i \in\{-1,1\}$ is the class' index. The goal is
to learn a classifier (e.g., a hyperplane) $h(x,u)$ where $x$ is the
vector of parameters of the function $h$ and $u$ is the input data. To
measure the distance of an observed output $v_i$ from the classifier
function $h$, a real-valued convex loss function $\ell(h;v)$ is defined. The
objective function is considered as the following averaged loss over the
training set \begin{align}\label{eqn:logistic}f(x):=
\frac{1}{N}\sum_{i=1}^N\ell(h(x,u_i);v_i).\end{align} The preceding objective can
be seen as a stochastic optimization model of the form
\eqref{eqn:problem}, where $F(x,\xi):= \ell(h(x,u);v)$ and
$\xi=(u,v)$.

Although, problem
\eqref{eqn:problem} may be seen as a deterministic problem, \an{the
challenges still arise} \us{when standard deterministic schemes are employed.
In particular, when the expectation is over a general measure space
(making computation of $\nabla_{x} \EXP{F(x,\xi)}$ difficult or
 impossible)  or
the distribution $\mathbb{P}$ is unavailable, standard gradient or
Newton-based schemes cannot be directly applied.} \us{This has led
	to significant research on Monte-Carlo sampling techniques. } Monte
	Carlo simulation methods have been used widely in the literature to
	solve stochastic optimization problems. Of these, sample average
	approximation (SAA) methods~\cite{shap03sampling} and stochastic approximation
	(SA) methods~(\cite{robbins51sa,nemirovski_robust_2009})
	(also referred to as stochastic gradient descent methods in the
	 context of optimization) are among popular approaches. It has been discussed that when the sample size is
	large, the computational effort for implementing SAA schemes does
	not scale with the number of samples and these methods become
	inefficient~(\cite{nemirovski_robust_2009,Farzad2}). \us{SA methods, introduced by Robbins and
	Monro~\cite{robbins51sa}, require the construction of a sequence $\{x_k\}$,
	given a randomly generated $x_0 \in \Real^n$:}
\begin{equation}\label{eqn:SA}\tag{SA}   
x_{k+1}:=x_k-\g_k\nabla \us{F}(x_k,\xi_k), \quad \hbox{for }k\geq 0,
\end{equation}
where $\g_k>0$ denotes the stepsize and $\nabla F(x_k,\xi_k)$ denotes
the sampled gradient of the function $f$ with respect to $x$
at $x_k$. Note that the \an{gradient $\nabla F(x_k,\xi_k)$ is assumed to be
an unbiased estimator of the true value of the gradient $\nabla f(x)$ at $x_k$, and 
assumed to be generated by a stochastic oracle.} SA schemes are
	characterized by several disadvantages, \an{including the poorer
		rate of convergence (than their deterministic counterparts) and
		the detrimental impact of conditioning on their performance.
		In deterministic regimes, when second derivatives
				are available,  Newton schemes and their quasi-Newton counterparts} 
				have proved to
be useful alternatives, particularly from the standpoint of displaying
faster rates of convergence~(\cite{Liu:1989:LMB:81100.83726,Nocedal2006NO}).
		

Recently, there has been a growing interest in applying stochastic variants of quasi-Newton (SQN) methods for solving optimization and large scale machine learning problems. In these methods, $x_k$ is given by the following update rule:
\begin{equation}\label{eqn:SQN}\tag{SQN}
x_{k+1}:=x_k-\g_kH_k^{-1}\nabla \us{F}(x_k,\xi_k), \quad \hbox{for }k\geq 0,
\end{equation}
where $H_k \succeq 0$ is an approximation of the Hessian matrix at iteration $k$ that incorporates the curvature information of the
	objective function \us{within} the algorithm. The convergence of this class of algorithms can be derived under a careful choice of the matrix $H_k$ and the stepsize sequence $\g_k$. In particular, boundedness of the eigenvalues of $H_k$ is an important factor in achieving global convergence in convex and nonconvex problems (\cite{Fukushima01,Bottou09}). While in \cite{Schraudolph07} the performance of SQN methods displayed to be favorable in solving high dimensional problems, Mokhtari et al.~\cite{mokh14} developed a regularized BFGS method (RES) \an{by updating the matrix $H_k$
	according to} a modified version of BFGS update rule to assure convergence. To address large scale applications, limited memory variants (L-BFGS) were employed to ascertain scalability in terms of the number of variables (\cite{nocedal15, Mokhtari15}). In a recent extension~\cite{wang14}, a
		stochastic quasi-Newton method is presented for solving nonconvex
	stochastic optimization problems. Also, a variance reduced SQN method 
	\an{with a constant stepsize was developed \cite{Lucchi15} for smooth strongly convex problems characterized by a 
linear convergence rate}. 
	

\noindent {\bf Motivation:} One of the main assumptions in the developed stochastic SQN method (e.g.~\cite{nocedal15, Mokhtari15}) is the strong convexity of the objective function. Specifically, this assumption plays an important role in deriving the rate of convergence of the algorithm. However, in many applications, the objective function is convex, but not strongly convex \an{such as, for example,} the logistic regression function that is given by $\ell(u^Tx,v):=\ln (1+\exp(-u^Txv))$ for $u,x\in\Real^n$ and $v \in \Real$. While lack of strong convexity might lead to a very slow convergence, no theoretical results on the convergence rate in given in the literature of stochastic SQN methods. 
A \an{simple} remedy to address this challenge is to regularize the objective function with the term $\frac{1}{2}\mu\|x\|^2$ and solve the approximate problem of the form 
\begin{equation}\label{eqn:problem-reg}
\min_{x \in \Real^n} f(x)+\frac{\mu}{2}\|x\|^2,
\end{equation}
where $\mu>0$ is the regularization parameter. A trivial drawback of this technique is that the optimal solution to the approximate problem \eqref{eqn:problem-reg} is not an optimal of the original problem \eqref{eqn:problem-reg}. Importantly, choosing $\mu$ to be a small number deteriorates the convergence rate of the algorithm. This issue is resolved in SA schemes through employing averaging techniques for non-strongly convex problems and they display the optimal rate of 
$\mathcal{O}\left(\frac{1}{\sqrt{k}}\right)$ \an{(see~\cite{nemirovski_robust_2009,Nedic2014}).} 
A limitation to averaging SA schemes is that boundedness of the gradient mapping is required to achieve such a rate. 

\noindent {\bf Contributions:} Motivated by these gaps, in this paper, we consider stochastic optimization problems with non-strongly convex objective functions and Lipschitz but possibly unbounded gradient mappings. We develop a so-called cyclic regularized stochastic BFGS algorithm to solve this class of problems. Our framework is general and can be adapted within other variants of SQN methods. Unlike the classic regularization, we allow the regularization parameter $\mu$, denoted by $\mu_k$,  to be updated and decay to zero through implementing the iterations. This enables the generated sequence to approach to the optimal solution of the original problem and also, benefits the scheme by guaranteeing a derived rate of convergence. A challenge in employing this technique is to maintain the secant condition and ascertain the positive definiteness of the BFGS matrix. We overcome this difficulty by carefully updating the regularization parameter and the BFGS matrix in a cyclic manner. We show that, under suitable assumptions on the stepsize and the regularization parameter (referred to as tuning sequences), the objective function value converges to the exact optimal value in an almost sure sense. Moreover, we show that under different settings, the algorithm achieves convergence in mean and we derive and upper bound for the error of the algorithm in terms of the tuning sequences. We complete our analysis by showing that under a specific choice of the tuning sequences, the rate of convergence in terms of the objective function value is of the order $\frac{1}{\sqrt[5]{k}}$. 


The rest of the paper is organized as follows. Section \ref{sec:alg} presents the outline of the proposed algorithm addressing problems with non-strongly convex objectives. In Section \ref{sec:conv}, we prove the convergence of the scheme in both almost sure and expected senses and derive the rate statement. We present the numerical experiments in Section \ref{sec:num}. The paper ends with some concluding remarks in Section \ref{sec:conc}.

 \textbf{Notation:} A vector $x$ is assumed to be
a column vector and $x^T$ denotes its transpose, \an{while} $\|x\|$ {denotes} the Euclidean vector norm, i.e.,
$\|x\|=\sqrt{x^Tx}$.
We write \textit{a.s.} as the abbreviation for ``almost
surely''. 
\an{For a symmetric matrix $B$, we write $\lambda_{min}(B)$ to denote its smallest eigenvalue.}
We use $\EXP{z}$ to denote the expectation of a random variable~$z$. A function $f:X \subset
			\mathbb{R}^n\rightarrow \mathbb{R}$ is said to be strongly convex
				with parameter $\mu>0$, if  $f(y)\geq
				f(x)+\nabla f(y)^T(y-x)+\frac{\mu}{2}\|x-y\|^2,$ for any $x,y \in X$. 
	A mapping $F:X \subset
			\mathbb{R}^n\rightarrow \mathbb{R}$
		is Lipschitz continuous with parameter $L>0$ if for any $x, y
			\in X$, we have $\|F(x)-F(y)\|\leq L\|x-y\|$.

\section{Outline of the algorithm}\label{sec:alg}
\us{We begin by  stating} our general assumptions \an{for problem~\eqref{eqn:problem}.} 
The underlying assumption in this paper is that {the function $f$ is convex and smooth}. 
\begin{assumption}\label{assum:convex}  
\begin{enumerate}
\item [(a)]  \an{The function $F(x,\xi)$ is convex with respect to $x$ for any $\xi \in \Omega$.}  
\item[(b)] $f(x)$ is continuously differentiable \us{with  Lipschitz
	continuous gradients} over $\Real^n$ with parameter $L>0$. 
	\item[(c)] \ The optimal solution set of problem \eqref{eqn:problem} is nonempty. 
\end{enumerate}
\end{assumption}
Next, we state the assumptions on the random variable $\xi$ and the properties of the stochastic estimator of the gradient mapping, i.e. $\nabla F$.  
\begin{assumption}\label{assum:main}
\begin{enumerate}
\item[(a)]  Random variables $\xi_k$ are i.i.d.\ for any $k \geq 0$; 
\item[(b)] The stochastic gradient mapping $\nabla F(x, \xi)$ is an unbiased estimator of $\nabla f(x)$, i.e.
$\EXP{\nabla F(x,\xi)}=\nabla f(x)$, and has bounded variance,
	i.e., there exists a \us{scalar} $\nu>0$ such that $\EXP{\|\nabla F(x,\xi)-\nabla f(x)\|^2} \leq \nu^2$ for any $x \in \mathbb{R}^n$.
\end{enumerate}
\end{assumption} 
To solve \eqref{eqn:problem}, we propose a regularization algorithm that generates a sequence $\{x_k\}$ for any $k \geq
0$:
\begin{align}\label{eqn:cyclic-reg-BFGS}\tag{CR-SQN}
x_{k+1}:=x_k -\gamma_k\left(B_k^{-1}+\delta_k \mathbf{I}\right)\left(\nabla F(x_k,\xi_k)+ \mu_k x_k\right).
\end{align}
Here, $\g_k>0$ denotes the stepsize at iteration $k$, $B_k$ denotes the approximation of the Hessian matrix, $\mu_k>0$ is the regularization parameter of the gradient mapping where 
\begin{align}\label{eqn:mu-k}
  \begin{cases}
   \mu_{k+1}=\mu_{k},      &   \text{if } k \text{ is even}\\
    \mu_{k+1}<\mu_{k},  &   \text{if } k \text{ is odd},
  \end{cases}
\end{align}
\an{while} $\delta_k >0$ is the regularization parameter of the matrix $H_k$. 
\fy{We assume when $k$ is even, $\mu_{k}<\mu_{k-1}$ is chosen such that $\nabla F(x_{k},\xi_k)+\mu_{k}x_k\neq 0$.\footnote{\an{Note that we can ensure this non-zero condition, as follows. If we have 
$\nabla F(x_{k},\xi_k)+\mu_{k}x_k=0$ and $x_k\ne0$, then by replacing $\mu_k$ with a smaller value the relation will hold.
If $x_k=0$, then we can draw a new sample of $\xi_k$ to satisfy the relation. 
We can get stuck at $x_k$ with sampling if $0$ solves the original problem.}} 
}
Let us define the matrix $B_k$ by the following rule:
\begin{align}\label{eqn:B-k}B_{k+1}:=
  \begin{cases}
    B_k-\frac{B_ks_k \an{s_k^T} B_k}{s_k^TB_ks_k}+\frac{{y_k^B}(y_k^B)^T}{s_k^T{y_k^B}} + \rho\mu_k \mathbf{I},      &    k \text{ even}\\
    B_{k},  &    k \text{  odd},
  \end{cases}
\end{align}
where for an even $k$,
\begin{align*}&s_k := x_{k+1}-x_k,\cr 
&{y_k^B}:=  \nabla F(x_{k+1},\xi_k) -\nabla F(x_k,\xi_k)  +  (1-\rho) \mu_ks_k,\end{align*}  
and $
0<\rho < 1$ is the regularization factor of the matrix $B_{k+1}$ at iteration $k$. 
To state the properties of the matrix $B_k$, we \an{start by defining the regularized function.}
\begin{definition}\label{def:regularizedF}
Consider the sequence $\{\mu_k\}$ of positive \an{scalars}.
The regularized function $f_k: \Real^n\to\mathbb{R}$ is defined as follows:
\[f_k(x):= f(x)+\frac{\mu_k}{2}{\|x\|^2},\quad  \hbox{for any } k \geq 0.\]
\end{definition}
Similar notation can be used for the regularized stochastic function 
$F_k$ as $F_k(x,\xi) := F(x,\xi)+\frac{\mu_k}{2}\|x\|^2$. We can now define the term $y^{reg}_k$ as the difference between the value of the regularized stochastic gradient mappings at two consecutive points as follows:
\begin{align*}
y^{reg}_k := \nabla F_k(x_{k+1},\xi_k) -\nabla F_k(x_k,\xi_k). 
\end{align*}
 
In the following result, we show that at iterations that the matrix $B_k$ is updated, the secant condition is satisfied implying that $B_k$ is well-defined. Also, we show that $B_k$ is positive definite for $k \geq 0$.
\begin{lemma}\label{BFGS-matrix}
\an{Let Assumption~\ref{assum:convex}(a) hold, and 
let $B_{k}$ be given by} the update rule \eqref{eqn:B-k}. 
\fy{Suppose $B_0 \succeq \rho \mu_0\mathbf{I}$ is a symmetric matrix. 
Then, for any even $k$, \an{the secant condition holds, i.e., $s_k^T{y_k^B} >0$, and $B_{k+1}s_k={y}^{reg}_k$}. Moreover, for any $k$, $B_k$ is symmetric and $B_{k+1} \succeq \rho  \mu_k \mathbf{I}$.}
\end{lemma}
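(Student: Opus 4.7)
The plan is to prove the three claims---the curvature condition, the secant relation, and the symmetry together with the lower bound on $B_{k+1}$---in an order that respects their dependencies, wrapping the last of them in an induction on $k$ whose hypothesis is that $B_k$ is symmetric and $B_k \succeq \rho \mu_{k-1}\mathbf{I}$ (with the convention $\mu_{-1}:=\mu_0$, so that the assumption $B_0\succeq \rho\mu_0\mathbf{I}$ acts as the base case). Once $B_k\succ 0$ is inductively in hand, the iterate $x_{k+1}$ and the increment $s_k$ produced by \eqref{eqn:cyclic-reg-BFGS} are well-defined, and the analysis splits according to the parity of $k$.

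For even $k$, I would first verify $s_k^T y_k^B > 0$. Expanding the definition of $y_k^B$ and using monotonicity of $\nabla F(\cdot,\xi_k)$, which follows from Assumption~\ref{assum:convex}(a), yields
\[
s_k^T y_k^B \;\geq\; (1-\rho)\mu_k\|s_k\|^2.
\]
To turn this into a strict inequality I need $s_k\neq 0$. Writing $s_k = -\gamma_k(B_k^{-1}+\delta_k\mathbf{I})(\nabla F(x_k,\xi_k)+\mu_k x_k)$ and noting that $B_k^{-1}+\delta_k\mathbf{I}$ is nonsingular by the induction hypothesis, this reduces to the non-degeneracy of the regularized stochastic gradient, which is exactly the condition engineered by the choice of $\mu_k$ in the footnote preceding the lemma. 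Since $\rho\in(0,1)$ and $\mu_k>0$, the bound above is then strictly positive.

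With $s_k^T y_k^B>0$ secured, the secant identity is a direct calculation: applying the update in \eqref{eqn:B-k} to $s_k$, the first two rank-one terms cancel $B_k s_k$, the third contributes $y_k^B$ (since $(y_k^B)^Ts_k = s_k^T y_k^B$), and the shift contributes $\rho\mu_k s_k$, giving
\[
B_{k+1}s_k = y_k^B+\rho\mu_k s_k = \nabla F(x_{k+1},\xi_k)-\nabla F(x_k,\xi_k)+\mu_k s_k = y_k^{reg}.
\]
Symmetry of $B_{k+1}$ is immediate since every summand in \eqref{eqn:B-k} is symmetric. For the lower bound at an even $k$, I would invoke the classical BFGS argument: whenever $B_k\succ 0$ and $s_k^T y_k^B>0$, the rank-two correction $B_k-\frac{B_ks_ks_k^TB_k}{s_k^TB_ks_k}+\frac{y_k^B(y_k^B)^T}{s_k^Ty_k^B}$ is positive semidefinite, via Cauchy--Schwarz in the $B_k$-inner product for the first two terms and the nonnegativity of the remaining rank-one term. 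Adding the explicit shift $\rho\mu_k\mathbf{I}$ then yields $B_{k+1}\succeq\rho\mu_k\mathbf{I}$ and closes the inductive step. For odd $k$, $B_{k+1}=B_k$ and by rule \eqref{eqn:mu-k} applied at the preceding even index one has $\mu_k=\mu_{k-1}$, so the induction hypothesis $B_k\succeq\rho\mu_{k-1}\mathbf{I}$ transfers directly to $B_{k+1}\succeq\rho\mu_k\mathbf{I}$.

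The main obstacle I anticipate is the strictness of the curvature condition: convexity alone yields only the weak monotonicity inequality $s_k^T(\nabla F(x_{k+1},\xi_k)-\nabla F(x_k,\xi_k))\ge 0$, and the divisor $s_k^T y_k^B$ in \eqref{eqn:B-k} would be ill-defined without a separate argument. The strictness is rescued entirely by the regularizer $(1-\rho)\mu_k\|s_k\|^2$ together with the deliberately imposed non-degeneracy of $\nabla F(x_k,\xi_k)+\mu_k x_k$; without the latter, $s_k$ could vanish and the entire induction on positive definiteness would collapse.
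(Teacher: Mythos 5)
Your proposal is correct and follows essentially the same route as the paper's proof: induction to keep $B_k$ symmetric and positive definite, convexity (monotonicity of $\nabla F(\cdot,\xi_k)$) plus the footnote's non-degeneracy of $\nabla F(x_k,\xi_k)+\mu_k x_k$ to get the strict curvature condition $s_k^Ty_k^B\geq(1-\rho)\mu_k\|s_k\|^2>0$, a direct cancellation for $B_{k+1}s_k=y_k^{reg}$, positive semidefiniteness of the rank-two BFGS correction plus the explicit $\rho\mu_k\mathbf{I}$ shift for the lower bound, and $\mu_k=\mu_{k-1}$ for the odd-$k$ case. The only cosmetic difference is that you argue the PSD step via Cauchy--Schwarz in the $B_k$-inner product while the paper uses the equivalent $B_k^{1/2}$-projection form, and your induction runs over all $k$ rather than only even indices.
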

\begin{proof}
\an{It can be easily seen, by the induction on $k$, that all $B_k$ are symmetric when $B_0$ is symmetric, assuming that the matrices are well defined.}
\an{
We use the induction on even values of $k$ to show that the other statements hold and that the matrices are well defined. 
Suppose $k\geq 2$ is even and for any even values of t with $t < k$, we have $s_t^T{y_t^B} >0$, $B_{t+1}s_t={y}^{reg}_t$, 
and $B_{t+1} \succeq \rho \mu_t \mathbf{I}$. 
We show that all of these relations hold for $t=k$, i.e., 
$s_k^T{y_k^B} >0$, $B_{k+1}s_k={y}^{reg}_k$, 
and $B_{k+1} \succeq \rho \mu_k \mathbf{I}$. 
First, we prove that the secant condition holds. We can write
\begin{align*}
s_k^T{y_k^B} 
&= (x_{k+1}-x_k)^T( \nabla F(x_{k+1},\xi_k) -\nabla F(x_k,\xi_k)  \cr 
&\quad+  (1-\rho )\mu_k(x_{k+1}-x_k)) \cr
&\geq (1-\rho )\mu_k\|x_{k+1}-x_{k}\|,
\end{align*}
where we used the convexity of $F(\cdot,\xi)$.} 
 
\an{From the induction hypothesis, $B_{k-1} \succeq \rho\mu_{k-2}\textbf{I}$ since $k-2$ is even.
Furthermore, since $k-1$ is odd,
we have $B_{k}=B_{k-1}$ by the update rule \eqref{eqn:B-k}.  
Therefore, $B_k$ is positive definite.
Note that since $k$ is even, the choice of $\mu_k$ is such that $\nabla F(x_k,\xi_k)+\mu_kx_k\neq 0$ (see the discussion
following~\eqref{eqn:mu-k}).
Since $B_k$ is positive definite, the matrix $B_k^{-1}+\delta_k\mathbf{I}$ is positive definite.
Therefore, 
we have $\left(B_k^{-1}+\delta_k \mathbf{I}\right)\left(\nabla F(x_k,\xi_k)+ \mu_k x_k\right) \neq 0$, implying that 
$x_{k+1} \neq x_k$. Hence
\[s_k^T{y_k^B} \geq (1-\rho )\mu_k\|x_{k+1}-x_{k}\|^2 >0,\]
where we used $\rho<1$.
Thus, the secant condition holds.}

Also, since $k-1$ is odd, by update rule~\eqref{eqn:mu-k}, it follows that $\mu_k=\mu_{k-1}$. 
From the update rule \eqref{eqn:B-k} and that $B_k$ is positive definite and symmetric, we have
\begin{align}\label{B_k:pos-def}
 B_{k+1}&
 &\succeq B^{0.5}\left(\textbf{I}-\frac{B_k^{0.5}s_k (s_k)^T B_k^{0.5}}{\|B_k^{0.5}s_k\|^2} \right)B^{0.5}
 + \rho\mu_k \mathbf{I},
\end{align}
where \an{the last relation is due to ${y_k^B}(y_k^B)^T\succeq  0$ and $s_k^T{y_k^B>}0$. 

Since $x_{k+1} \neq x_k$ we have $s_k^TB_ks_k \neq 0$. Thus,
the matrix $\textbf{I}-\frac{B_k^{0.5}s_k (s_k)^T B_k^{0.5}}{\|s_k^TB_k^{0.5}\|^2}$ is well defined and 
positive semidefinite, since it is symmetric with eigenvalues are between 0 and 1.}

Next, we show that $B_{k+1}$ satisfies $B_{k+1}s_k=y_k^{reg}$.  
Using the update rule \eqref{eqn:B-k}, \an{for even $k$} we have,
\begin{align*}
  B_{k+1}s_k& 
  = B_ks_k \an{ -\frac{B_ks_k s_k^T B_ks_k}{s_k^TB_ks_k} +\frac{ y_k^B (y_k^B)^T s_k}{s_k^T{y_k^B}} }+ \rho\mu_k s_k \cr 
& = B_ks_k \an{- B_ks_k + y_k^B }+ \rho\mu_k s_k \cr 
& = \nabla F(x_{k+1},\xi_k) -\nabla F(x_k,\xi_k)  \an{+ \mu_k s_k}.
\end{align*}
Since $k$ is even, we have $\mu_{k+1}=\mu_k$ implying that
\begin{align*}
  B_{k+1}s_k   
&  = \nabla F(x_{k+1},\xi_k) -\nabla F(x_k,\xi_k)  \cr &+  \mu_{k+1}x_{k+1}-\mu_kx_k \cr 
& = \nabla F_k(x_{k+1},\xi_k) -\nabla F_k(x_k,\xi_k) = y_k^{reg},
\end{align*}
 where the last equality follows by the definition of the regularized mappings.
 
From the preceding discussion, we conclude that the induction hypothesis holds also for $t=k$. 
Therefore, all the desired results hold for any even $k$. 
To complete the proof, we need to show that for any odd $k$, we have 
$B_{k+1} =B_k \succeq \rho \mu_{k} \textbf{I}$. 
By the update rule~\eqref{eqn:B-k}, we have $B_{k+1}=B_k$. 
Since $k-1$ is even, $B_k \succeq \rho \mu_{k-1} \textbf{I}$. Also, from \eqref{eqn:mu-k} 
we have $\mu_k=\mu_{k-1}$. Therefore,  $B_{k+1} =B_k \succeq \rho \mu_{k} \textbf{I}$. 
\end{proof}

\section{\an{Convergence Analysis}}\label{sec:conv}
In this section, we analyze the convergence properties of the stochastic
recursion ~\eqref{eqn:cyclic-reg-BFGS}. The following assumption provides the
required conditions on the stepsize sequence $\g_k$ and is a commonly
used assumption in the regime of stochastic approximation
methods~\cite{mokh14,wang14,nocedal15}.

\an{
\begin{property}\label{proper:propsfk}[Properties of the regularized function]
The function $f_k$ from Definition~\ref{def:regularizedF} for any $k \geq 0$ has the following properties:
\begin{itemize}
\item [(a)] $f_k$ is strongly convex with a parameter $\mu_k$.
\item [(b)] $f_k$ has Lipschitzian gradients with parameter $L+\mu_k$.
 \item [(c)] $f_k$ has a unique minimizer over $\Real^n$, denoted by $x^*_k$. Moreover, for any $x \in \Real^n$, 
 \[2\mu_k (f_k(x)-f_k(x^*_k)) \leq \|\nabla f_k(x)\|^2,\] 
 \[ \|\nabla f_k(x)\|^2\leq 2(L+\mu_k) (f_k(x)-f_k(x^*_k)).\]
\end{itemize}
\end{property}
}
%
\an{
The existence and uniqueness of $x^*_k$ in Property~\ref{proper:propsfk}(c) 
is due to the strong convexity of the function $f_k$ (see, for example, Sec. 1.3.2 in~\cite{Polyak87}), 
while the relation for the gradient is known to hold for a 
strongly convex function with a parameter $\mu$ that also has Lipschitz gradients with a parameter 
$L$ }\fy{(see Lemma 1 in page 23 in~\cite{Polyak87}). }

The next result provides an important property \an{for} the
recursion~\eqref{eqn:cyclic-reg-BFGS} that will be subsequently used to show the
convergence of the scheme. Throughout, we let $\sF_k$ denote the history of the method up to time $k$, i.e., 
$\sF_k=\{x_0,\xi_0,\xi_1,\ldots,\xi_{k-1}\}$ for $k\ge 1$ and $\sF_0=\{x_0\}$. 
Also, we denote the stochastic error of the \an{regularized gradient} estimator by 
\begin{align}
\label{def:wk}w_k:= \nabla F(x_k,\xi_k)-\nabla f(x_k), \an{\quad\hbox{for all }k\ge0.} 
\end{align}

\begin{lemma}\label{lemma:F-ineq}
[A recursive error bound inequality]
Consider the algorithm \eqref{eqn:cyclic-reg-BFGS}. 
Suppose sequences $\g_k$, $\delta_k$, and $\mu_k$ are chosen such that for any $k \geq 0$, $\mu_k$ satisfies \eqref{eqn:mu-k}, and  
\an{
\begin{align}\label{ineq:BoundCondition}
(L+\mu_k)^2 \g_k \left((\rho\mu_{k-1})^{-1}+\delta_k\right)^2\leq  \delta_k\mu_k.
\end{align}
}
Under Assumptions~\ref{assum:convex} and \ref{assum:main}, for any $k\geq 1$ and \an{any optimal solution $x^*$,}
we have
\begin{align}\label{ineq:cond-recursive-F-k}
& \EXP{f_{k+1}(x_{k+1})\mid \sF_k}-f^* \\ \notag
& \leq (1-\g_k\delta_k \mu_k)\left(f_k(x_k)-f^*\right) +\g_k\delta_k \frac{\mu_k^2}{2}\|x^*\|^2\cr
&+ \frac{(L+\mu_k)}{2}\g_k^2\left((\rho\mu_{k-1})^{-1}+\delta_k\right)^2\nu^2.
\end{align}
\end{lemma}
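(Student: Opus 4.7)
The plan is to reduce the bound on $\EXP{f_{k+1}(x_{k+1})\mid\sF_k}$ to one on $\EXP{f_k(x_{k+1})\mid\sF_k}$ by observing that $\mu_{k+1}\le\mu_k$ under~\eqref{eqn:mu-k} gives $f_{k+1}(x_{k+1})\le f_k(x_{k+1})$ pointwise, and then to apply a standard descent argument to the regularized function $f_k$. Setting $H_k:=B_k^{-1}+\delta_k\mathbf{I}$, the iteration reads $x_{k+1}=x_k-\gamma_k H_k(\nabla f_k(x_k)+w_k)$. Property~\ref{proper:propsfk}(b) supplies Lipschitz gradients for $f_k$ with modulus $L+\mu_k$, so the descent inequality $f_k(x_{k+1})\le f_k(x_k)+\nabla f_k(x_k)^T(x_{k+1}-x_k)+\tfrac{L+\mu_k}{2}\|x_{k+1}-x_k\|^2$ is applicable. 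Substituting the update and taking $\EXP{\cdot\mid\sF_k}$ eliminates the linear-in-$w_k$ cross term by Assumption~\ref{assum:main}(b) and yields
\begin{align*}
\EXP{f_k(x_{k+1})\mid\sF_k} & \le f_k(x_k) - \gamma_k\,\nabla f_k(x_k)^T H_k\nabla f_k(x_k)\\
&\quad + \tfrac{L+\mu_k}{2}\gamma_k^2\bigl(\|H_k\nabla f_k(x_k)\|^2+\EXP{\|H_k w_k\|^2\mid\sF_k}\bigr).
\end{align*}

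I would next extract eigenvalue bounds on $H_k$ from Lemma~\ref{BFGS-matrix}: since $B_k\succeq\rho\mu_{k-1}\mathbf{I}$ for $k\ge 1$, one has $\delta_k\mathbf{I}\preceq H_k\preceq\bigl((\rho\mu_{k-1})^{-1}+\delta_k\bigr)\mathbf{I}$. These bounds give $\EXP{\|H_k w_k\|^2\mid\sF_k}\le\bigl((\rho\mu_{k-1})^{-1}+\delta_k\bigr)^2\nu^2$ (which is exactly the third summand of the target inequality), together with $\nabla f_k(x_k)^T H_k\nabla f_k(x_k)\ge\delta_k\|\nabla f_k(x_k)\|^2$ and $\|H_k\nabla f_k(x_k)\|^2\le\bigl((\rho\mu_{k-1})^{-1}+\delta_k\bigr)^2\|\nabla f_k(x_k)\|^2$. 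The remaining coefficient of $\|\nabla f_k(x_k)\|^2$ is $-\gamma_k\delta_k+\tfrac{L+\mu_k}{2}\gamma_k^2\bigl((\rho\mu_{k-1})^{-1}+\delta_k\bigr)^2$; condition~\eqref{ineq:BoundCondition}, combined with $\mu_k/(L+\mu_k)\le 1$, is exactly what is needed to keep this coefficient at most $-\tfrac{\gamma_k\delta_k}{2}$. Strong convexity (Property~\ref{proper:propsfk}(c)) then gives $\|\nabla f_k(x_k)\|^2\ge 2\mu_k(f_k(x_k)-f_k(x^*_k))$, producing a contraction with factor $(1-\gamma_k\delta_k\mu_k)$ in $(f_k(x_k)-f_k(x^*_k))$ after subtracting $f_k(x^*_k)$ from both sides.

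The final step is to translate from the regularized optimum $f_k(x^*_k)$ to $f^*$: since $f^*\le f_k(x^*_k)\le f_k(x^*)=f^*+\tfrac{\mu_k}{2}\|x^*\|^2$, rewriting $f_k(x_k)-f_k(x^*_k)=(f_k(x_k)-f^*)-(f_k(x^*_k)-f^*)$ inside the contraction and using $f_k(x^*_k)-f^*\le\tfrac{\mu_k}{2}\|x^*\|^2$ manufactures the additional term $\gamma_k\delta_k\tfrac{\mu_k^2}{2}\|x^*\|^2$ and delivers~\eqref{ineq:cond-recursive-F-k} (the corresponding inequality for $\EXP{f_{k+1}(x_{k+1})\mid\sF_k}-f^*$ then follows from $f_{k+1}\le f_k$). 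The main obstacle is the algebraic bookkeeping around condition~\eqref{ineq:BoundCondition}: one must verify that the second-order quadratic coming from the Lipschitz bound is absorbed with exactly the right factor $(L+\mu_k)$, and carefully track sign reversals when applying the strong-convexity \emph{lower} bound on $\|\nabla f_k(x_k)\|^2$ to a term that carries a \emph{negative} coefficient.
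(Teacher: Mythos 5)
Your proposal is correct and follows essentially the same route as the paper's proof: the descent lemma for $f_k$, the eigenvalue bounds $\delta_k\mathbf{I}\preceq B_k^{-1}+\delta_k\mathbf{I}\preceq\left((\rho\mu_{k-1})^{-1}+\delta_k\right)\mathbf{I}$ from Lemma~\ref{BFGS-matrix}, condition~\eqref{ineq:BoundCondition} to absorb the quadratic gradient term, strong convexity of $f_k$ for the contraction, the bound $f_k(x^*_k)\le f_k(x^*)=f^*+\tfrac{\mu_k}{2}\|x^*\|^2$ to produce the $\gamma_k\delta_k\tfrac{\mu_k^2}{2}\|x^*\|^2$ term, and finally $f_{k+1}(x_{k+1})\le f_k(x_{k+1})$. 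The only cosmetic difference is that you absorb the quadratic term directly via \eqref{ineq:BoundCondition} combined with $\mu_k\le L+\mu_k$ before applying the strong-convexity lower bound, whereas the paper applies both gradient inequalities of Property~\ref{proper:propsfk}(c) and then invokes \eqref{ineq:BoundCondition}; both yield the same contraction factor $(1-\gamma_k\delta_k\mu_k)$.
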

\vspace{.1in}
\begin{proof}
The Lipschitzian property of $\nabla f_k$ \an{(see Property~\ref{proper:propsfk}(b)) and} 
the recursion \eqref{eqn:cyclic-reg-BFGS} imply that 
\begin{align*}
&f_k(x_{k+1}) \leq f_k(x_k)+\nabla f_k(x_k)^T(x_{k+1}-x_k)\\ &+\frac{ (L+\mu_k)}{2}\|x_{k+1}-x_k\|^2 \cr
&= f_k(x_k)-\g_k\nabla f_k(x_k)^T\left(B_k^{-1}+\delta_k \mathbf{I}\right)(\nabla F(x_k,\xi_k) +\mu_kx_k)\cr &+ \frac{ (L+\mu_k)}{2}\g_k^2\|\left(B_k^{-1}+\delta_k \mathbf{I}\right)(\nabla F(x_k,\xi_k)+\mu_kx_k)\|^2.\end{align*}
\an{From the definition of the stochastic error $w_k$ (see~\eqref{def:wk}) and 
and the definition of the regularized function (see Definition~\ref{def:regularizedF}), 
we have
\[\nabla F(x_k,\xi_k)+\mu_kx_k=\nabla f(x_k) + \mu_kx_k +w_k= \nabla f_k(x_k) +w_k.\]
Hence,}
\begin{align*}
&f_k(x_{k+1}) \leq  f_k(x_k)\cr
& \ -\g_k\nabla f_k(x_k)^T(B_k^{-1}+\delta_k \mathbf{I})(\nabla f_k(x_k)+w_k)\cr 
& \ + \frac{ (L+\mu_k)}{2}\g_k^2\|\left(B_k^{-1}+\delta_k \mathbf{I}\right)( \nabla f_k(x_k)+w_k)\|^2\\
& \leq  f_k(x_k)-\g_k\lambda_{min}\left(B_k^{-1}+\delta_k \mathbf{I}\right)\|\nabla f_k(x_k)\|^2\cr 
& \ \an{-\g_k\nabla f_k(x_k)^T\left(B_k^{-1}+\delta_k \mathbf{I}\right)w_k}\cr 
& \ + \an{\frac {(L+\mu_k)}{2}\g_k^2\|B_k^{-1}+\delta_k \mathbf{I}\|^2 \left(\|\nabla f_k(x_k)+ w_k\|^2\right)}.
\end{align*}
Note that Lemma \ref{BFGS-matrix}(b) implies that 
\[\delta_k\textbf{I}\preceq B_k^{-1}+\delta_k\an{\textbf{I}} \preceq \left((\rho\mu_{k-1})^{-1}+\delta_k\right)\textbf{I}.\]
\an{From the preceding two relations we obtain
\begin{align*}
&f_k(x_{k+1}) 
\leq f_k(x_k)-\g_k\delta_k \|\nabla f_k(x_k)\|^2\cr 
& \ -\g_k\nabla f_k(x_k)^T\left(B_k^{-1}+\delta_k \mathbf{I}\right)w_k\cr 
&+  \frac{(L+\mu_k)}{2}\g_k^2\left((\rho\mu_{k-1})^{-1}+\delta_k\right)^2 \|\nabla f_k(x_k) +w_k\|^2.
\end{align*}
Next, we take the expected value from both sides with respect to $\sF_{k}$. Note that the matrix $B_k$ and $x_k$ are both deterministic parameters if the history $\sF_k$ is known. Note that from Assumption \ref{assum:main}, $\EXP{w_k\mid \sF_k}=0$ and $\EXP{\|w_k\|^2\mid \sF} \leq \nu^2$. Therefore, 
\[\EXP{\|\nabla f_k(x_k) +w_k\|^2\mid \sF_k} \le \|\nabla f_k(x_k) \|^2+\nu^2.\] 
Thus, we obtain
\begin{align*}
&\EXP{f_k(x_{k+1})\mid \sF_k} 
\leq f_k(x_k)-\g_k\delta_k \|\nabla f_k(x_k)\|^2\cr  
& \ +  \frac{(L+\mu_k)}{2}\g_k^2\left((\rho\mu_{k-1})^{-1}+\delta_k\right)^2 \left(\|\nabla f_k(x_k)\|^2 +\nu^2\right).
\end{align*}
Using Property~\ref{proper:propsfk}(c) for the function $f_k$, we have
\begin{align*}
&\EXP{f_k(x_{k+1})\mid \sF_k} 
\leq f_k(x_k)-2\g_k\delta_k \mu_k \left( f_k(x_k) -f_k(x_k^*) \right)\cr  
& \ +  (L+\mu_k)^2\g_k^2\left((\rho\mu_{k-1})^{-1}+\delta_k\right)^2 \left( f_k(x_k) -f_k(x_k^*) \right)\cr
& \ +  \frac{(L+\mu_k)^2}{2}\g_k^2\left((\rho\mu_{k-1})^{-1}+\delta_k\right)^2 \nu^2.
\end{align*}

Due to our assumption on the choice of the sequences $\g_k$, $\delta_k$ and $\mu_k$, we have 
$(L+\mu_k)^2 \g_k^2\left((\rho\mu_{k-1})^{-1}+\delta_k\right)^2\leq  \g_k\delta_k\mu_k$. 
Thus,
\begin{align}\label{eq:oneo}
&\EXP{f_k(x_{k+1})\mid \sF_k}  \leq  f_k(x_k)-\g_k\delta_k \mu_k\left(f_k(x_k)-f_k(x^*_k) \right)\cr 
&+ \frac{(L+\mu_k)^2}{2}\g_k^2\left((\rho\mu_{k-1})^{-1}+\delta_k\right)^2\nu^2.
\end{align}
}
In the last step, \an{we} 
build a recursive inequality for the error term $f_{k}(x_k)-f^*$. Adding and subtracting $f^*$, we obtain
\an{
\begin{align*}
&f_k(x_k)-f_k(x^*_k) 
= \left(f_k(x_k)-f^*  \right) +\left( f^*-f_k(x^*_k) \right)\cr
& = \left(f_k(x_k)-f^*  \right)  +\left( f_k(x^*)- \frac{\mu_k}{2}\|x^*\|^2-f_k(x^*_k) \right),
\end{align*}
where the last equality follows from $f^*=f_k(x^*) - \frac{\mu_k}{2}\|x^*\|^2$.
Since $x^*_k$ is the minimizer of $f_{k}(x^*_k)$, we have $f_k(x^*)-f_{k}(x^*_k) \geq  0$, implying that
\[f_k(x_k)-f_k(x^*_k) \ge f_k(x_k)-f^* - \frac{\mu_k}{2}\|x^*\|^2.\]
By substituting the preceding inequality in~\eqref{eq:oneo}, we obtain}
\begin{align*}
&\EXP{f_k(x_{k+1})\mid \sF_k} 
 \leq  f_k(x_k)-\g_k\delta_k \mu_k\left(f_k(x_k)-f^*  \right)\cr 
& +\g_k\delta_k \an{\frac{\mu_k^2}{2}}\|x^*\|^2
+ \an{\frac{(L+\mu_k)}{2}}\g_k^2\left((\rho\mu_{k-1})^{-1}+\delta_k\right)^2\nu^2.
\end{align*}
\an{
By subtracting $f^*$ from both sides of the preceding inequality, we see that
\begin{align*}
&\EXP{f_k(x_{k+1}) \mid \sF_k} - f^*
 \leq  \left( 1 -\g_k\delta_k \mu_k\right) \left(f_k(x_k)-f^*  \right)\cr 
& +\g_k\delta_k \frac{\mu_k^2}{2}\|x^*\|^2
+ \frac{(L+\mu_k)}{2}\g_k^2\left((\rho\mu_{k-1})^{-1}+\delta_k\right)^2\nu^2.
\end{align*}
Next, we relate the values $f_{k+1}(x_{k+1})$ and $f_{k}(x_{k+1})$. 
From Definition \ref{def:regularizedF} and $\mu_k$ being \an{non-increasing} we can write 
\begin{align*}f_{k+1}(x_{k+1})&= f(x_{k+1})+\frac{\mu_{k+1}}{2}\|x_{k+1}\|^2  \\
& \leq  f(x_{k+1})+\frac{\mu_{k}}{2}\|x_{k+1}\|^2 = f_k(x_{k+1}).\end{align*}
Therefore, the desired inequality \eqref{ineq:cond-recursive-F-k} holds.
}
\end{proof}
We make use of the following result, which can be found
in~\cite{Polyak87} (see Lemma 11 on page 50).

\begin{lemma}\label{lemma:probabilistic_bound_polyak}
Let $\{v_k\}$ be a sequence of nonnegative random variables, where 
$\EXP{v_0} < \infty$, and let $\{\a_k\}$ and $\{\beta_k\}$
be deterministic scalar sequences such that:
\begin{align*}
& \EXP{v_{k+1}|v_0,\ldots, v_k} \leq (1-\alpha_k)v_k+\beta_k
\quad a.s. \ \hbox{for all }k\ge0, \cr
& 0 \leq \alpha_k \leq 1, \quad\ \beta_k \geq 0, \\
&\quad\ \sum_{k=0}^\infty \alpha_k =\infty, 
\quad\ \sum_{k=0}^\infty \beta_k < \infty, 
\quad\ \lim_{k\to\infty}\,\frac{\beta_k}{\alpha_k} = 0. 
\end{align*}
\an{Then, $v_k \rightarrow 0$ almost surely.}
\end{lemma}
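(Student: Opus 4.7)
The plan is to recognize this as a classical Robbins--Siegmund type supermartingale statement and to pass through the Robbins--Siegmund almost-sure convergence theorem to extract both convergence of $v_k$ and summability of $\sum_k \alpha_k v_k$, then to combine the latter with $\sum_k\alpha_k=\infty$ to force the limit to be zero.

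First, I would rewrite the hypothesis in the Robbins--Siegmund template. Setting $\mathcal{F}_k=\sigma(v_0,\ldots,v_k)$, the inequality
\[
\EXP{v_{k+1}\mid\mathcal{F}_k}\le (1-\alpha_k)v_k+\beta_k=v_k-\alpha_k v_k+\beta_k
\]
is of the form $\EXP{v_{k+1}\mid\mathcal{F}_k}\le(1+a_k)v_k-b_k+c_k$ with $a_k=0$, $b_k=\alpha_k v_k\ge0$ (using $0\le\alpha_k\le 1$ and $v_k\ge0$), and $c_k=\beta_k\ge0$. Since $\sum_k a_k=0<\infty$ and $\sum_k c_k=\sum_k\beta_k<\infty$ by hypothesis, the Robbins--Siegmund theorem applies and yields two conclusions simultaneously: $v_k$ converges almost surely to some nonnegative random limit $v_\infty$ with $\EXP{v_\infty}<\infty$, and $\sum_{k=0}^\infty\alpha_k v_k<\infty$ almost surely.

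Next, I would exploit the divergence condition $\sum_k\alpha_k=\infty$. On the full-measure event where $\sum_k\alpha_k v_k<\infty$, this divergence forces $\liminf_{k\to\infty}v_k=0$: otherwise, if $v_k\ge\varepsilon>0$ for all sufficiently large $k$ along some subsequence, a Kronecker/partial-sum argument against the weights $\alpha_k$ would contradict $\sum_k\alpha_k v_k<\infty$. Combining $\liminf v_k=0$ with the already established convergence $v_k\to v_\infty$ a.s., the limit must equal zero almost surely, which is the desired conclusion.

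The main obstacle is really just invoking Robbins--Siegmund cleanly; the upgrade from convergence in expectation (which is easy by taking total expectations and using a deterministic lemma based on $\sum\alpha_k=\infty$, $\sum\beta_k<\infty$, $\beta_k/\alpha_k\to 0$) to almost-sure convergence is exactly what the supermartingale theorem buys us. The condition $\beta_k/\alpha_k\to 0$ is actually not needed for the almost-sure statement once Robbins--Siegmund is in hand, although it is consistent with (and implied-in-spirit by) the other hypotheses; I would simply note this in passing rather than use it. The proof is therefore short and structural, with the only delicate step being the passage from summability $\sum_k\alpha_k v_k<\infty$ with $\sum_k\alpha_k=\infty$ to $\liminf v_k=0$, which is a deterministic measure-theoretic argument applied pathwise on the almost-sure event.
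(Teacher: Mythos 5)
Your proof is correct, but it is worth noting that the paper itself gives no proof of this lemma at all: it is quoted verbatim from Polyak (Lemma 11, p.~50 of \cite{Polyak87}) and used as a black box, so your Robbins--Siegmund derivation is a genuinely different (self-contained) route rather than a reconstruction of anything in the text. The reduction is the standard one: write the recursion as $\EXP{v_{k+1}\mid\mathcal{F}_k}\le(1+a_k)v_k-b_k+c_k$ with $a_k=0$, $b_k=\alpha_k v_k\ge 0$, $c_k=\beta_k$, note that $\EXP{v_0}<\infty$ propagates integrability by induction so the conditional expectations are well defined, and conclude from Robbins--Siegmund that $v_k\to v_\infty$ a.s.\ together with $\sum_k\alpha_k v_k<\infty$ a.s.; then $\sum_k\alpha_k=\infty$ forces $v_\infty=0$ pathwise. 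Your observation that $\beta_k/\alpha_k\to 0$ is not needed is also right for the statement as quoted here: in Polyak's original lemma that hypothesis (together with $\alpha_k\le 1$) is what delivers the additional conclusions $\EXP{v_k}\to 0$ and the tail probability bound, which this paper's version has dropped, keeping only the almost-sure claim. One small wording slip: in the final step you say ``if $v_k\ge\varepsilon$ for all sufficiently large $k$ along some subsequence''; lower bounds along a subsequence alone would not contradict $\sum_k\alpha_k v_k<\infty$ (the weights $\alpha_k$ could be tiny there). What you actually need, and what your argument implicitly uses, is that on any event where $v_\infty>0$ the convergence already established gives $v_k\ge v_\infty/2$ \emph{eventually}, and then $\sum_{k\ge K}\alpha_k v_k\ge(v_\infty/2)\sum_{k\ge K}\alpha_k=\infty$, a contradiction; stated this way the step is airtight and no separate ``$\liminf$'' discussion is required.
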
 
\fy{In order to apply Lemma \ref{lemma:probabilistic_bound_polyak} to the inequality \eqref{ineq:cond-recursive-F-k} and prove the almost sure convergence, we use the following definitions: 
\begin{align}\label{def:lemma3}
& v_k := f_k(x_k)-f^*, \quad \alpha_k := \g_k\delta_k\mu_k, \notag\\ &\beta_k :=  \g_k\delta_k \mu_k^2\|x^*\|^2+ (L+\mu_k)\g_k^2\left((\rho\mu_{k-1})^{-1}+\delta_k\right)^2\nu^2.
\end{align}
\an{To satisfy the conditions of Lemma \ref{lemma:probabilistic_bound_polyak}, we 
identify a set of sufficient conditions on the sequences $\{\gamma_k\}, \{\mu_k\}$, and $\{\delta_k\}$ in 
forthcoming assumption.}
Later in Lemma \ref{lemma:a.s.sequences}, we provide a class of sequences that meet these assumptions.}
\begin{assumption}\label{assum:sequences}
[Sufficient conditions on sequences for a.s. convergence]
Let the sequences $\{\gamma_k\}, \{\mu_k\}$, and $\{\delta_k\}$ be non-negative and satisfy the following conditions:
\begin{itemize}
\item [(a)]  $\lim_{k \to \infty }\frac{\g_k}{\mu_k^3\delta_k} =0$;
\item [(b)] $\delta_k\mu_{k-1} \leq 1$ for $k \geq 1$;
\item [(c)] $\mu_k$ satisfies \eqref{eqn:mu-k} and $\mu_k \to 0$;
\item [(d)] $\sum_{k=0}^\infty \g_k\delta_k\mu_k =\infty$;
\item [(e)] $\sum_{k=0}^\infty \left(\frac{\g_k}{\mu_k}\right)^2 <\infty$;
\item [(f)] $\sum_{k=0}^\infty \g_k\delta_k\mu_k^2 <\infty$;
\item [(g)] $\g_k\delta_k\mu_k \leq 1$ for $k \geq 0$;
\end{itemize}
\end{assumption}

\an{With Assumption~\ref{assum:sequences}, we have the following  result.}
\begin{theorem}\label{thm:a.s}[Almost sure convergence]
Consider the algorithm \eqref{eqn:cyclic-reg-BFGS}. Suppose Assumptions~\ref{assum:convex},
\ref{assum:main} and~\ref{assum:sequences} hold. 
\an{Then,
 $\lim_{k \to \infty }f(x_k)= f^*$ a.s.}
\end{theorem}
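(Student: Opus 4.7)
The plan is to apply Lemma~\ref{lemma:probabilistic_bound_polyak} to the recursive inequality~\eqref{ineq:cond-recursive-F-k} from Lemma~\ref{lemma:F-ineq}, using the non-negative error $v_k := f_k(x_k) - f^*$ together with the scalar sequences $\alpha_k$ and $\beta_k$ already flagged in~\eqref{def:lemma3}. Non-negativity $v_k \ge 0$ is immediate since $f_k(x_k) \ge f(x_k) \ge f^*$. To invoke Lemma~\ref{lemma:F-ineq} I also need condition~\eqref{ineq:BoundCondition}; this should follow (at least for large enough $k$) from Assumption~\ref{assum:sequences} by expanding $((\rho\mu_{k-1})^{-1}+\delta_k)^2 \le 2/(\rho\mu_{k-1})^2 + 2\delta_k^2$ and applying items (a), (b), and the monotonicity of $\{\mu_k\}$.

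Next, I would verify the three hypotheses of Lemma~\ref{lemma:probabilistic_bound_polyak} one by one. The bounds $\alpha_k\in[0,1]$ and $\beta_k\ge 0$ are immediate from (g) and non-negativity; $\sum_k\alpha_k=\infty$ is exactly (d). For $\sum_k\beta_k<\infty$, the first summand in $\beta_k$ is summable by (f); for the second summand I again expand the square, observe that $L+\mu_k$ is bounded since $\mu_k\to 0$, and use $\delta_k\le 1/\mu_{k-1}$ from (b) together with $\mu_{k-1}\ge\mu_k$ to bound both resulting pieces by a constant multiple of $(\gamma_k/\mu_k)^2$, whose summability is exactly (e).

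The most delicate step is $\beta_k/\alpha_k\to 0$, which I expect to be the main technical obstacle. Dividing out, one obtains $\mu_k\|x^*\|^2$ (which vanishes by (c)) plus a term of the form $(L+\mu_k)\nu^2\gamma_k((\rho\mu_{k-1})^{-1}+\delta_k)^2/(\delta_k\mu_k)$. Expanding the square and using $\mu_{k-1}\ge\mu_k$, this splits into pieces controlled by $\gamma_k/(\mu_k^3\delta_k)$ and $\gamma_k\delta_k/\mu_k$. The first piece goes to zero by (a) directly. For the second, I would use the identity $\gamma_k\delta_k/\mu_k = (\gamma_k/(\mu_k^3\delta_k))\cdot(\mu_k\delta_k)^2$ together with $(\mu_k\delta_k)^2\le(\mu_{k-1}\delta_k)^2\le 1$ from (b), so that (a) again forces the limit to be zero. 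Juggling the shifted index $\mu_{k-1}$ versus $\mu_k$ while simultaneously exploiting several parts of Assumption~\ref{assum:sequences} is what makes this step the key hurdle.

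Once Lemma~\ref{lemma:probabilistic_bound_polyak} applies, I conclude $f_k(x_k)-f^*\to 0$ almost surely. To recover the stated conclusion, I would observe that $f_k(x_k)-f^* = (f(x_k)-f^*) + \frac{\mu_k}{2}\|x_k\|^2$ is a sum of two non-negative quantities, so vanishing of the sum forces each summand to vanish, yielding $f(x_k)\to f^*$ almost surely.
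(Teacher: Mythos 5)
Your proposal is correct and follows essentially the same route as the paper: establish condition~\eqref{ineq:BoundCondition} for all sufficiently large $k$ from Assumption~\ref{assum:sequences}(a)--(c), apply Lemma~\ref{lemma:probabilistic_bound_polyak} with exactly the $v_k,\alpha_k,\beta_k$ of~\eqref{def:lemma3} (verifying summability of $\beta_k$ via (e)--(f) and $\beta_k/\alpha_k\to0$ via (a),(c)), and finish by splitting $f_k(x_k)-f^*$ into the two non-negative terms $f(x_k)-f^*$ and $\tfrac{\mu_k}{2}\|x_k\|^2$. The only cosmetic difference is that you bound $\bigl((\rho\mu_{k-1})^{-1}+\delta_k\bigr)^2$ by $2(\rho\mu_{k-1})^{-2}+2\delta_k^2$, whereas the paper uses $\delta_k\le(\rho\mu_{k-1})^{-1}$ to get $\bigl(2(\rho\mu_{k-1})^{-1}\bigr)^2$; both rest on the same use of Assumption~\ref{assum:sequences}(b) and monotonicity of $\mu_k$.
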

\begin{proof} First, note that from Assumption \ref{assum:sequences}(a), there exists $K\geq 1$ such that for any $k\geq K$ we have \fy{$\frac{4\g_k}{\rho^2\mu_k^3\delta_k} \leq \frac{1}{(L+\mu_0)^2}$}. Taking this into account, using Assumption \ref{assum:sequences}(b) and (c) we can write
\begin{align*}
&\g_k\left((\rho\mu_{k-1})^{-1}+\delta_k\right)^2 \leq \g_k\left(2(\rho\mu_{k-1})^{-1}\right)^2 \\
& \leq \frac{4\g_k}{\rho^2\mu_{k-1}^2} \leq   \frac{4\g_k}{\rho^2\mu_{k}^2}=  (\mu_k\delta_k)\frac{4\g_k}{\rho^2\delta_k\mu_{k}^3} \leq  
 \frac{\delta_k\mu_k}{(L+\mu_k)^2},
\end{align*}
implying that \an{condition~\eqref{ineq:BoundCondition} of Lemma~\ref{lemma:F-ineq} holds.
Hence, relation~\eqref{ineq:cond-recursive-F-k} holds for any $k\geq K$.}
Next, we apply Lemma \ref{lemma:probabilistic_bound_polyak} to prove a.s.\ convergence of 
the algorithm~\eqref{eqn:cyclic-reg-BFGS}. 
\fy{Consider the definitions in~\eqref{def:lemma3} for any $k \geq K$.}
The non-negativity of $\alpha_k$ and $\beta_k$ is implied by the definition and that $\g_k$, $\delta_k$, and $\mu_k$ are positive.
From  \eqref{ineq:cond-recursive-F-k}, we have 
\begin{align*}
& \EXP{v_{k+1}\mid \sF_k} \leq (1-\alpha_k)v_k+\beta_k
\quad  \hbox{for all }k\geq K.
\end{align*}
Since $f^* \leq f(x)$ for any arbitrary $x \in \Real^n$, we can write 
\[v_k=f_k(x_k)-f^*= (f(x_k)-f^*)+\frac{\mu_k}{2}\|x_k\|^2  \geq 0.\]
From Assumption \ref{assum:sequences}(g), we obtain $\alpha_k \leq 1$. Also, from Assumption \ref{assum:sequences}(d), we get $\sum_{k=K}^\infty \alpha_k =\infty$. 
\an{Using Assumption~\ref{assum:sequences}(b) and the definition of $\beta_k$ in~\eqref{def:lemma3}, for an arbitrary solution $x^*$, we can write} 
\begin{align*}& \sum_{k=K}^\infty \beta_k \leq \|x^*\|^2\sum_{k=K}^\infty\g_k\delta_k\mu_k^2+ 4(L+\mu_0)\frac{\nu^2}{\rho^2}\sum_{k=0}^\infty \frac{\g_k^2}{\mu_k^2} <\infty,\end{align*}
where the last inequality is deduced by 
\an{Assumptions~\ref{assum:sequences}(e) and~\ref{assum:sequences}(f).} Similarly, we can write
\begin{align*} \lim_{k\to \infty}\frac{\beta_k}{\alpha_k}& \leq \|x^*\|^2\lim_{k\to \infty}{\mu_k}+ 4(L+\mu_0)\frac{\nu^2}{\rho^2}\lim_{k\to \infty} \frac{\g_k^2\mu_k^{-2}}{\g_k\delta_k\mu_k}  \\ &  = \|x^*\|^2\lim_{k\to \infty}{\mu_k}+4(L+\mu_0)\frac{\nu^2}{\rho^2}\lim_{k\to \infty}\frac{\g_k}{\mu_k^3\delta_k}=0, \end{align*}
where the last equation is implied \an{by Assumptions\ref{assum:sequences}(a) and~\ref{assum:sequences}(c).}
Therefore, all the conditions of Lemma \ref{lemma:probabilistic_bound_polyak} hold and we conclude that 
\an{$v_k:=f_k(x_k)-f^*$ converges to 0 a.s.} 
Let us define $v'_k:= f(x_k)-f^*$ and $v''_k := \frac{\mu_k}{2}\|x_k\|^2$, \an{ 
so that $v_k=v'_k+v''_k$. Since $v'_k$ and $v''_k$ are non-negative, and  $v_k \to 0$ a.s., 
 it follows that $v'_k \to 0$ and $v''_k \to 0$ a.s., implying that
$\lim_{k \to \infty }f(x_k)= f^*$ a.s.}
\end{proof}


\begin{lemma}\label{lemma:a.s.sequences}
Let the sequences $\g_k$, $\delta_k$, and $\mu_k$ be given by the following rules:
\begin{align}\label{equ:seq}
 \g_k=\frac{\g_0}{(k+1)^a}, \quad \delta_k=\frac{\delta_0}{(k+1)^b}, \quad \mu_{k}=\frac{\mu_02^c}{\left(k+\kappa\right)^c},
\end{align}
where $\kappa=2$ if $k$ is even and $\kappa=1$ otherwise, $\g_k$, $\delta_0$, $\mu_0$ are positive scalars such that $\delta_0\mu_0 \leq 2^b$ and $\g_0\delta_0\mu_0 \leq1$, and $a$, $b$, and $c$ are positive scalars that satisfy the following conditions:
\begin{align*}& a>3c+b, \ a+b+c \leq 1\\ & \ a-c > 0.5, \ a+2c+b>1.\end{align*}
Then, the sequences $\g_k$, $\delta_k$, and $\mu_k$ satisfy Assumption \ref{assum:sequences}.
\end{lemma}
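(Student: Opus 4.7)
The plan is to verify each of the seven conditions (a)--(g) of Assumption~\ref{assum:sequences} by direct substitution of the prescribed polynomial-decay forms. The key observation is that the parity-dependent formula $\mu_k = \mu_0 2^c/(k+\kappa)^c$ satisfies $\mu_k = \Theta(k^{-c})$ for every $k$, so up to constant factors the three sequences behave asymptotically as $\gamma_k \sim k^{-a}$, $\delta_k \sim k^{-b}$, and $\mu_k \sim k^{-c}$. This reduces each condition to an arithmetic inequality among $a$, $b$, $c$ or to a constraint on the initial constants.

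First I would establish condition (c). A short computation using the definition of $\kappa$ shows that $\mu_{2j} = \mu_{2j+1} = \mu_0/(j+1)^c$, so $\mu_{k+1} = \mu_k$ when $k$ is even and $\mu_{k+1} < \mu_k$ when $k$ is odd, which is exactly \eqref{eqn:mu-k}; since $c>0$, this also gives $\mu_k \to 0$.

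Next I would address the series and limit conditions (a), (d), (e), (f). Substituting the forms gives $\gamma_k/(\mu_k^3 \delta_k) = \Theta(k^{-(a-3c-b)})$, $\gamma_k\delta_k\mu_k = \Theta(k^{-(a+b+c)})$, $(\gamma_k/\mu_k)^2 = \Theta(k^{-2(a-c)})$, and $\gamma_k\delta_k\mu_k^2 = \Theta(k^{-(a+b+2c)})$. Applying the $p$-series test together with the standard fact that $k^{-p} \to 0$ iff $p > 0$, the four hypotheses $a > 3c+b$, $a+b+c \leq 1$, $a-c > 0.5$, and $a+b+2c > 1$ translate exactly into (a), (d), (e), and (f), respectively.

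Finally, the pointwise bounds (b) and (g) rely on the initial-constant constraints $\delta_0 \mu_0 \leq 2^b$ and $\gamma_0 \delta_0 \mu_0 \leq 1$. For (g), since $\gamma_k$, $\delta_k$, $\mu_k$ are non-increasing, $\gamma_k \delta_k \mu_k \leq \gamma_0 \delta_0 \mu_0 \leq 1$. For (b), the product $\delta_k \mu_{k-1}$ is largest at $k=1$, where it equals $(\delta_0/2^b)\mu_0 \leq 1$, and for $k \geq 2$ the inequality follows from monotonicity of $\delta_k$ and $\mu_{k-1}$. The main obstacle is really just careful bookkeeping with the parity-dependent offset $\kappa$; since both parities yield $\mu_k$ of the same polynomial order and conditions (d)--(f) depend only on tail behavior, no estimate beyond routine $p$-series comparisons is needed.
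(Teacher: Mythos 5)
Your proposal is correct and follows essentially the same route as the paper: direct substitution of the polynomial-decay forms, $p$-series comparisons for conditions (a), (d), (e), (f), the parity computation $\mu_{2j}=\mu_{2j+1}=\mu_0/(j+1)^c$ for \eqref{eqn:mu-k}, and monotonicity plus the initial-constant constraints $\delta_0\mu_0\leq 2^b$, $\gamma_0\delta_0\mu_0\leq 1$ for (b) and (g). The only difference is cosmetic: you absorb the $\kappa$-offset into $\Theta(k^{-c})$ asymptotics while the paper carries the explicit constants, which changes nothing in substance.
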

\begin{proof}
In the following, we show that the presented class of sequences satisfy each of the conditions listed in Assumption \ref{assum:sequences}:\\
(a) Replacing the sequences by their given rules we obtain \begin{align*} \frac{\g_k}{\mu_k^3\delta_k} & =\frac{\g_0}{8^c\mu_0^3\delta_0}(k+1)^{-a+b}(k+\kappa)^{3c} \\ &\leq  \frac{\g_0}{8^c\mu_0^3\delta_0}(k+1)^{-a+b+3c}.\end{align*}
Since $a>b+3c$, the preceding term goes to zero verfying Assumption \ref{assum:sequences}(a).\\
(b) The given rules \eqref{equ:seq} imply that $\delta_k$ and $\mu_k$ are both non-increasing sequences. Therefore, we have $\delta_k\mu_{k-1} \leq \delta_1\mu_{0}$ for any $k\geq 1$. So, to show that Assumption \ref{assum:sequences}(b) holds, it is enought to show that $\delta_1\mu_{0} \leq 1$. From \eqref{equ:seq} we have $\delta_1=\delta_02^{-b}$. Since we assumed that $\delta_0\mu_0 \leq 2^b$, we can conclude that $\delta_1\mu_{0} \leq 1$ implying that Assumption \ref{assum:sequences}(b) holds.\\
(c) Let $k$ be an even number. Thus, $\kappa=2$. From \eqref{equ:seq} we have $\mu_{k}=\mu_{k+1}=\frac{\mu_02^c}{\left(k+2\right)^c}$. Now, let $k$ be an odd number. Again, according to \eqref{equ:seq} can write 
\[\mu_{k+1}=\frac{\mu_02^c}{\left((k+1)+2\right)^c }<\frac{\mu_02^c}{\left(k+1\right)^c}= \frac{\mu_02^c}{\left(k+\kappa\right)^c}=\mu_k.\]
Therefore, $\mu_k$ given by \eqref{equ:seq} satisfies \eqref{eqn:mu-k}. Also, from \eqref{equ:seq} we have $\mu_k \to 0$. Thus, Assumption \ref{assum:sequences}(c) holds.
$\mu_k$ satisfies \eqref{eqn:mu-k} and $\mu_k \to 0$.\\
(d) From \eqref{equ:seq}, we can write $$\sum_{k=0}^\infty \g_k\delta_k\mu_k =\g_0\delta_0\mu_02^c\sum_{k=0}^\infty (k+1)^{-(a+b+c)} = \infty,$$
where the last inequality is due to the assumption that $a+b+c \leq 1$. Therefore, Assumption \ref{assum:sequences}(d) holds.\\
(e) From \eqref{equ:seq}, we have
\begin{align*}&\sum_{k=0}^\infty \left(\frac{\g_k}{\mu_k}\right)^2 =  \frac{\g_0^2}{\mu_0^24^c}\sum_{k=0}^\infty \frac{(k+\kappa)^{2c}}{(k+1)^{2a}} \\
& \leq  \frac{\g_0^2}{\mu_0^24^c}\left(\sum_{k=0}^1 \frac{(k+\kappa)^{2c}}{(k+1)^{2a}}+\sum_{k=2}^\infty \frac{(2k)^{2c}}{k^{2a}}\right)<\infty \\ 
\end{align*}
where the last inequality is due to $a-c>0.5$. Therefore, Assumption \ref{assum:sequences}(e) is verified.\\
(f) Using \eqref{equ:seq}, it follows
\begin{align*}&\sum_{k=0}^\infty \g_k\delta_k\mu_k^2 =  \g_0\delta_0\mu_0^24^c\sum_{k=0}^\infty (k+\kappa)^{-2c}(k+1)^{a+b} \\
&\leq  \g_0\delta_0\mu_0^24^c\sum_{k=0}^\infty (k+1)^{-(2c+a+b)}<\infty,\end{align*}
where the last inequality is due to \an{$a+2c+b>1$}. Therefore, Assumption \ref{assum:sequences}(e) holds.\\
(g) The rules in \eqref{equ:seq} imply that $\g_k$ ,$\delta_k$ and $\mu_k$ are all non-increasing sequences. We also assumed that $\g_0\delta_0\mu_0 \leq 1$. Hence, $\g_k\delta_k\mu_k \leq 1$ for any $k \geq 1$ and Assumption \ref{assum:sequences}(f) holds.
\end{proof}

\begin{remark}
When $a=0.75$, $b=0$, and $c=0.24$, and $\g_0=\delta_0=\mu_0=0.9$, the Assumption \ref{assum:sequences} is satisfied.
\end{remark}
\begin{assumption}\label{assum:sequences-ms-convergence}[Sufficient conditions on sequences for convergence in mean]
Let the sequences $\{\gamma_k\}, \{\mu_k\}$, and $\{\delta_k\}$ be non-negative and satisfy the following conditions:
\begin{itemize}
\item [(a)] $\lim_{k \to \infty }\frac{\g_k}{\mu_k^3\delta_k} =0$;
\item [(b)] $\delta_k\mu_{k-1} \leq 1$ for $k \geq 1$;
\item [(c)] $\mu_k$ satisfies \eqref{eqn:mu-k};
\item [(d)] There exist $0<\alpha<1$ and $K_1 \geq 0$ such that $\frac{\g_{k-1}}{\mu_{k-1}^3\delta_{k-1}}\leq \frac{\g_{k}}{\mu_{k}^3\delta_{k}}(1+\alpha \g_k\delta_k\mu_k)$ for $k \geq K_1$;
\item [(e)] There exist a scalar $\mathcal{B}>0$ and $K_2 \geq 0$ such that $\delta_k \mu_k^4 \leq \mathcal{B}\g_k$ for $k \geq K_2$;
\end{itemize}
\end{assumption}

\begin{theorem}\label{thm:mean}[Convergence in mean]
Consider the algorithm \eqref{eqn:cyclic-reg-BFGS}. 
Suppose Assumptions \ref{assum:convex}, \ref{assum:main} and \ref{assum:sequences-ms-convergence} hold. 
Then, there exists some $K \geq 0$ such that \begin{align}\label{ineq:bound}
 \EXP{f(x_{k+1})}-f^*\leq \theta\frac{\g_k}{\mu_k^3\delta_k} \quad \hbox{for any } k \geq K,
 \end{align}
  where \an{ $f^*$ is the optimal value of problem~\eqref{eqn:problem}},
  $$\theta = \max \bigg\{\frac{\mu_K^3\delta_Ke_{K+1}}{\g_K},\frac{0.5\mathcal{B}\|x^*\|^2+2(L+\mu_0)\frac{\nu^2}{\rho^2}}{1-\alpha}\bigg\},$$
  with $e_{K+1} := \EXP{f_{K+1}(x_{K+1})}-f^*.$
\end{theorem}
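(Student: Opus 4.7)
The plan is to establish the claimed bound by induction on $k \geq K$, starting from the one-step recursion of Lemma~\ref{lemma:F-ineq}. First I would choose $K$ large enough that (i) condition~\eqref{ineq:BoundCondition} of Lemma~\ref{lemma:F-ineq} is satisfied; exactly as in the proof of Theorem~\ref{thm:a.s}, parts (a)--(c) of Assumption~\ref{assum:sequences-ms-convergence} yield
\[
\g_k\bigl((\rho\mu_{k-1})^{-1}+\delta_k\bigr)^2 \;\leq\; \frac{4\g_k}{\rho^2\mu_k^2}\;\leq\;\frac{\delta_k\mu_k}{(L+\mu_k)^2},
\]
and (ii) $k \geq \max(K_1, K_2)$ so that parts (d) and (e) are in force. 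Taking total expectation in~\eqref{ineq:cond-recursive-F-k} and setting $e_k := \EXP{f_k(x_k)} - f^*$ gives the deterministic recursion
\[
e_{k+1} \;\leq\; (1-\g_k\delta_k\mu_k)\,e_k + \g_k\delta_k\tfrac{\mu_k^2}{2}\|x^*\|^2 + \tfrac{L+\mu_k}{2}\g_k^2\bigl((\rho\mu_{k-1})^{-1}+\delta_k\bigr)^2\nu^2.
\]

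Writing $\tau_k := \g_k\delta_k\mu_k$ and noting the identity $\tau_k\cdot\g_k/(\mu_k^3\delta_k) = \g_k^2/\mu_k^2$, I would then prove $e_{k+1} \leq \theta\,\g_k/(\mu_k^3\delta_k)$ by induction on $k \geq K$. The base case $k = K$ is immediate from the first term in the maximum defining $\theta$. For the inductive step, each of the three terms on the right of the recursion can be written as a multiple of $\g_k/(\mu_k^3\delta_k)$: Assumption~\ref{assum:sequences-ms-convergence}(d) together with the inductive hypothesis and the elementary inequality $(1-\tau_k)(1+\alpha\tau_k) \leq 1 - (1-\alpha)\tau_k$ bounds the first term by $\theta(1-(1-\alpha)\tau_k)\,\g_k/(\mu_k^3\delta_k)$; the bound already used to verify~\eqref{ineq:BoundCondition} bounds the variance term by $(2(L+\mu_0)\nu^2/\rho^2)\,\tau_k\,\g_k/(\mu_k^3\delta_k)$; and Assumption~\ref{assum:sequences-ms-convergence}(e) (i.e.\ $\delta_k\mu_k^4 \leq \mathcal{B}\g_k$) bounds the bias term by $(\mathcal{B}\|x^*\|^2/2)\,\tau_k\,\g_k/(\mu_k^3\delta_k)$.

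Collecting the three pieces and factoring out $\g_k/(\mu_k^3\delta_k)$, the inductive step reduces to
\[
\theta\bigl(1-(1-\alpha)\tau_k\bigr) + \Bigl(\tfrac12\mathcal{B}\|x^*\|^2 + \tfrac{2(L+\mu_0)\nu^2}{\rho^2}\Bigr)\tau_k \;\leq\; \theta,
\]
which is precisely the condition $\theta \geq (\tfrac12\mathcal{B}\|x^*\|^2 + 2(L+\mu_0)\nu^2/\rho^2)/(1-\alpha)$ encoded in the second term of the maximum defining $\theta$. The induction therefore closes; since $f_{k+1}(x_{k+1}) \geq f(x_{k+1})$ by Definition~\ref{def:regularizedF}, we conclude $\EXP{f(x_{k+1})} - f^* \leq e_{k+1} \leq \theta\,\g_k/(\mu_k^3\delta_k)$, as claimed.

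The main obstacle is arranging for all three terms of the recursion to scale as the target rate $\g_k/(\mu_k^3\delta_k)$: the carried-over error is handled by the multiplicative slackness built into Assumption~\ref{assum:sequences-ms-convergence}(d), the variance by the same estimate that verifies~\eqref{ineq:BoundCondition}, and the bias by Assumption~\ref{assum:sequences-ms-convergence}(e). The algebraic identity $(1-\tau)(1+\alpha\tau) \leq 1 - (1-\alpha)\tau$ is what supplies the room to recover the rate while keeping the constant $\theta$ finite.
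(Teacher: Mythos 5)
Your proposal is correct and follows essentially the same route as the paper's proof: take total expectation in the recursion of Lemma~\ref{lemma:F-ineq}, reduce the bias and variance terms to multiples of $\g_k^2/\mu_k^2$ via Assumption~\ref{assum:sequences-ms-convergence}(b), (c), (e), and close an induction on $k\geq K$ using part (d) together with $(1-\tau)(1+\alpha\tau)\leq 1-(1-\alpha)\tau$ and the two-term maximum defining $\theta$, finishing with $f_{k+1}(x_{k+1})\geq f(x_{k+1})$. The only difference is cosmetic notation ($\tau_k=\g_k\delta_k\mu_k$), not substance.
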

\begin{proof}
Similar to the proof of Theorem \ref{thm:a.s}, from Assumption \ref{assum:sequences-ms-convergence}(a), 
there exists $K\geq 1$ such that for any $k\geq K_0$, the condition \eqref{ineq:BoundCondition} holds and, therefore, 
the inequality \eqref{ineq:cond-recursive-F-k} holds. 
Let $K=\max\{K_0,K_1,K_2\}$. 
Taking expectation from both sides of \eqref{ineq:cond-recursive-F-k}, we obtain \an{for any solution $x^*$},
\begin{align*}
& e_{k+1} \leq (1-\g_k\delta_k \mu_k)e_k+\g_k\delta_k \an{\frac{\mu_k^2}{2}}\|x^*\|^2\notag \\ 
&+ \an{\frac{(L+\mu_k)}{2}}\g_k^2\left((\rho\mu_{k-1})^{-1}+\delta_k\right)^2\nu^2,
\end{align*}

where $e_k:= \EXP{f_k(x_k)}-f^*$. Using Assumption \ref{assum:sequences-ms-convergence}(b), (c) and (e), the preceding inequality yields
\begin{align}\label{ineq:cond-recursive-F-k-expected2}
& e_{k+1} \leq (1-\g_k\delta_k \mu_k)e_k+\frac{\mathcal{B}}{2}\|x^*\|^2\frac{\g_k^2}{\mu_k^2}+ 2(L+\mu_0)\frac{\nu^2}{\rho^2}\frac{\g_k^2}{\mu_k^2}\notag \\
& =  (1-\g_k\delta_k \mu_k)e_k+\mathcal{B}_1\frac{\g_k^2}{\mu_k^2}, \qquad \hbox{for } k \geq K,
\end{align}
where $\mathcal{B}_1:= \frac{\mathcal{B}}{2}\|x^*\|^2+2(L+\mu_0)\frac{\nu^2}{\rho^2}$. We use induction to show the desired result. First, we show that \eqref{ineq:bound} holds for $k=K$. We have 
\begin{align*}e_{K+1}& =\EXP{f_{K+1}(x_{K+1})}-f^*\\ &=\left(\frac{\mu_K^3\delta_K(\EXP{f_{K+1}(x_{K+1})}-f^*)}{\g_K}\right)\frac{\g_K}{\mu_K^3\delta_K}
 \leq \theta \frac{\g_K}{\mu_K^3\delta_K},\end{align*}
implying that \eqref{ineq:bound} holds for $k=K$. Now assume that $e_{k} \leq  \theta\frac{\g_{k-1}}{\mu_{k-1}^3\delta_{k-1}}$, for some $k \geq K$. We show that $e_{k+1} \leq \theta\frac{\g_{k}}{\mu_{k}^3\delta_{k}}$.  From the induction hypothesis and \eqref{ineq:cond-recursive-F-k-expected2} we have
\begin{align*}
& e_{k+1} \leq  (1-\g_k\delta_k \mu_k)\ \theta\frac{\g_{k-1}}{\mu_{k-1}^3\delta_{k-1}}+\mathcal{B}_1\frac{\g_k^2}{\mu_k^2},
\end{align*}
Using Assumption \ref{assum:sequences-ms-convergence}(d) we obtain
\begin{align*}
& e_{k+1} \leq  (1-\g_k\delta_k \mu_k)\theta\frac{\g_{k}}{\mu_{k}^3 \ \delta_{k}}(1+\alpha \g_k\delta_k\mu_k)+\mathcal{B}_1\frac{\g_k^2}{\mu_k^2}.
\end{align*}
The definition of $\theta$ and $\mathcal{B}_1$ imply that the term $\theta(1-\alpha) -\mathcal{B}_1$ is non-negative. It follows 
\begin{align*}
 e_{k+1} &\leq  \theta\frac{\g_{k}}{\mu_{k}^3 \delta_{k}}-\theta(1-\alpha)\frac{\g_k^2}{\mu_k^2} +\mathcal{B}_1\frac{\g_k^2}{\mu_k^2}\\  &=\theta\frac{\g_{k}}{\mu_{k}^3 \delta_{k}}-\left(\theta(1-\alpha) -\mathcal{B}_1\right)\frac{\g_k^2}{\mu_k^2} \leq  \theta\frac{\g_{k}}{\mu_{k}^3 \delta_{k}},
\end{align*}
This shows that the induction argument holds true. Also, we have $f_{k+1}(x_{k+1})= f(x_{k+1})+\frac{\mu_{k+1}}{2}\|x_{k+1}\|^2 \geq f(x_{k+1})$.
Therefore, we conclude that \eqref{ineq:bound} holds. 
\end{proof}
\begin{lemma}\label{lemma:mean-sequences}
Let the sequences $\g_k$, $\delta_k$, and $\mu_k$ be given by \eqref{equ:seq},
where $\g_k$, $\delta_0$, $\mu_0$ are positive scalars such that $\delta_0\mu_0 \leq 2^b$, and $a$, $b$, and $c$ are positive scalars that satisfy the following conditions:
\begin{align*}& a>3c+b, \ a+b < 1, \ -a+4c+b\geq 0.\end{align*}
Then, the sequences $\g_k$, $\delta_k$, and $\mu_k$ satisfy Assumption \ref{assum:sequences-ms-convergence}.
\end{lemma}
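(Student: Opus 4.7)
My plan is to verify the five conditions (a)--(e) of Assumption~\ref{assum:sequences-ms-convergence} in turn by plugging in the closed forms~\eqref{equ:seq} and reducing each requirement to an inequality among the exponents $a$, $b$, $c$. Conditions (a), (b), and (c) coincide with the corresponding parts of Assumption~\ref{assum:sequences}, so I would lift the arguments verbatim from the proof of Lemma~\ref{lemma:a.s.sequences}: (a) follows from $a>3c+b$ applied to $\gamma_k/(\mu_k^3\delta_k)\sim k^{-(a-b-3c)}\to 0$; (b) follows from $\delta_0\mu_0\le 2^b$ together with the monotonicity of $\delta_k$ and $\mu_k$; and (c) is built into the cyclic definition of $\mu_k$.

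For condition (e), I substitute to obtain $\delta_k\mu_k^4=\delta_0\mu_0^4\,16^c(k+1)^{-b}(k+\kappa)^{-4c}$ and $\gamma_k=\gamma_0(k+1)^{-a}$. Using $(k+\kappa)^{-4c}\le(k+1)^{-4c}$ (since $\kappa\ge 1$), the desired inequality reduces to showing $(k+1)^{a-b-4c}$ is uniformly bounded. The hypothesis $-a+4c+b\ge 0$ forces the exponent $a-b-4c$ to be non-positive, so the bound holds and an appropriate choice of $\mathcal{B}$ absorbs the leading constants.

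The main obstacle is condition (d), which I would dispatch by a parity split. Introduce $a_k:=\gamma_k/(\mu_k^3\delta_k)$ and $b_k:=\gamma_k\delta_k\mu_k$; the requirement is $a_{k-1}/a_k\le 1+\alpha b_k$. The key simplification is that for odd $k$ one has $k-1+\kappa'=k+1=k+\kappa$, so $\mu_{k-1}=\mu_k$, and the ratio $a_{k-1}/a_k$ collapses to $(1+1/k)^{a-b}$. Expanding gives $a_{k-1}/a_k-1\le 2(a-b)/k$ for $k$ large (with $a-b>0$ by $a>3c+b$), and the requirement becomes $2(a-b)/k\le\alpha b_k$ with $b_k\sim k^{-(a+b+c)}$, which I would balance using the hypothesis $a+b<1$ by choosing $K_1$ large and $\alpha\in(0,1)$ appropriately. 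For even $k$, $\mu_{k-1}/\mu_k=((k+2)/k)^c$ and a direct expansion of the ratio yields a leading correction $(a-b-6c)/k\le -2c/k<0$ (via $a\le 4c+b$), so $a_{k-1}/a_k\le 1$ for $k$ large and condition (d) holds trivially on the even subsequence. The delicate step is the odd case, where the polynomial decay rate of $b_k$ must dominate the $1/k$ drift from $(1+1/k)^{a-b}$; this comparison is what pins down the admissible exponent range and constitutes the crux of the argument.
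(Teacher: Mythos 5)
Your treatment of conditions (a), (b), (c) and (e) is correct and coincides with the paper's proof: (a)--(c) are lifted from Lemma~\ref{lemma:a.s.sequences}, and for (e) the bound $\delta_k\mu_k^4/\g_k \leq C(k+1)^{a-b-4c}$ together with $-a+4c+b\geq 0$ is exactly the paper's computation. Your parity split for (d) is a finer version of the paper's argument (the paper bounds $\mu_k^3/\mu_{k-1}^3\le 1$ and $\delta_k/\delta_{k-1}\le 1$ and works with $(1+1/k)^a-1=\mathcal{O}(1/k)$ uniformly in $k$), and your even-$k$ observation that the ratio falls below $1$ for large $k$, via $a-b-6c\le -2c<0$, is a nice refinement.

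The gap is precisely at the step you flag as the crux, the odd-$k$ case. There you must dominate a drift of size $(a-b)/k+\mathcal{O}(1/k^2)$ by $\alpha\g_k\delta_k\mu_k\sim k^{-(a+b+c)}$, which requires $a+b+c\le 1$ (and, when equality holds, an additional comparison of the constants $a-b$ versus $\alpha\g_0\delta_0\mu_0 2^c$); the hypothesis $a+b<1$ that you invoke does not deliver this. Indeed the stated constraints only force $(a-b)/4\le c<(a-b)/3$, so for instance $a=0.8$, $b=0$, $c=0.25$ satisfies $a>3c+b$, $a+b<1$, $-a+4c+b\ge 0$, yet $a+b+c=1.05>1$, and then the inequality in Assumption~\ref{assum:sequences-ms-convergence}(d) fails along the odd subsequence for all large $k$; so the balancing step as you state it is a non sequitur. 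To be fair, the paper's own proof makes essentially the same leap: it asserts that the right-hand side of \eqref{ineq:partd} is of order $k^{-(a+b+c)}$ ``and that $a+b+c<1$,'' although $a+b+c<1$ is not among the lemma's hypotheses and in fact $a+b+c=1$ for the parameters $a=0.8$, $b=0$, $c=0.2$ used in Theorem~\ref{lemma:rate}. So the difficulty is inherited from the lemma's statement, but your specific justification via $a+b<1$ is where your argument breaks, and a complete proof would need either the extra hypothesis $a+b+c<1$ or an explicit constant comparison in the boundary case.
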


\begin{proof}
In the following, we verify the conditions of Assumption \ref{assum:sequences-ms-convergence}.\\
\noindent
Conditions (a), (b), and (c): 
This is already shown in parts (a), (b) and (c) of the proof of Lemma \ref{lemma:a.s.sequences} due to $a>3c+b$, $c>0$, and $\delta_0\mu_0 \leq 2^b$.\\
\noindent
(d) It suffices to show there exist $K_1$ and $\alpha \in (0,1)$ such that for any $k \geq K_1$
\begin{align}\label{ineq:partd}\frac{\g_{k-1}}{\g_k}\frac{\mu_{k}^3}{\mu_{k-1}^3}\frac{\delta_{k}}{\delta_{k-1}}-1\leq \alpha \g_k\delta_k\mu_k.\end{align}
From \eqref{equ:seq}, we obtain 
\begin{align*}&\frac{\g_{k-1}}{\g_k}\frac{\mu_{k}^3}{\mu_{k-1}^3}\frac{\delta_{k}}{\delta_{k-1}}-1
\leq \frac{\g_{k-1}}{\g_k}-1=\left(1+\frac{1}{k}\right)^a-1\\
& = 1+\frac{a}{k}+{o}(\frac{1}{k})-1=\mathcal{O}\left(\frac{1}{k}\right),
\end{align*}
where the first inequality is implied due to both $\mu_k$ and $\delta_k$ are non-increasing sequences, and in the second equation we used the Taylor's expansion of $\left(1+\frac{1}{k}\right)^a$. Therefore, since the right hand-side of the relation \eqref{ineq:partd} is of the order $\frac{1}{k^{a+b+c}}$ and that $a+b+c<1$, the preceding inequality shows that such $\alpha$ and $K_1$ exist such that Assumption \ref{assum:sequences-ms-convergence}(d) holds.\\
\noindent
(e) From \eqref{equ:seq}, we have 
\begin{align*} \frac{\delta_k \mu_k^4}{\g_k}& =\delta_0\mu_02^c(k+\kappa)^{-4c}(k+1)^{a-b} \leq\frac{\delta_0\mu_02^c}{(k+1)^{-a+b+4c}}. \end{align*}
Since we assumed $-a+4c+b\geq 0$, there exists $\mathcal{B}>0$ such that Assumption \ref{assum:sequences-ms-convergence}(e) is satisfied.
\end{proof}

\begin{theorem}\label{lemma:rate}[Rate of convergence]
Consider the algorithm \eqref{eqn:cyclic-reg-BFGS}. Suppose Assumptions \ref{assum:convex} and \ref{assum:main} are satisfied. Let the sequences $\g_k$, $\delta_k$, and $\mu_k$ be given by \eqref{equ:seq} with $a=0.8$, $b=0$, and $c=0.2$, and $\delta_0=\mu_0=0.9$ and $\g_0>0$. Then, \begin{itemize}
\item [(a)] \an{$\lim_{k\to\infty} f(x_k)= f^*$  almost surely, where $f^*$ is the optimal value of problem~\eqref{eqn:problem}. }
\item [(b)] We have \[\EXP{f(x_k)}-f^*=\mathcal{O}(\frac{1}{\sqrt[5]{k}}).\] 
\end{itemize}
\end{theorem}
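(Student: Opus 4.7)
The proof is essentially a verification exercise that plugs the specific parameter choices $a=0.8$, $b=0$, $c=0.2$ into the machinery already developed in Lemmas \ref{lemma:a.s.sequences}, \ref{lemma:mean-sequences} and Theorems \ref{thm:a.s}, \ref{thm:mean}. My plan is to handle the two parts in sequence, with the rate order in (b) obtained by a short asymptotic estimate.

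For part (a), I would first check that the exponents $(a,b,c)=(0.8,0,0.2)$ satisfy every inequality required by Lemma \ref{lemma:a.s.sequences}: namely $a > 3c+b$ (since $0.8 > 0.6$), $a+b+c \le 1$ (holds with equality, $1.0 \le 1$), $a-c > 0.5$ (since $0.6 > 0.5$), and $a+2c+b > 1$ (since $1.2 > 1$). The auxiliary conditions $\delta_0\mu_0 \le 2^b = 1$ and $\g_0 \delta_0 \mu_0 \le 1$ are satisfied for $\delta_0 = \mu_0 = 0.9$ and $\g_0$ small enough (I will remark that taking $\g_0 \le 1/(0.9)^2$ suffices). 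With these verifications, Lemma \ref{lemma:a.s.sequences} delivers Assumption \ref{assum:sequences}, after which Theorem \ref{thm:a.s} immediately gives $f(x_k)\to f^*$ almost surely.

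For part (b), the analogous verification uses Lemma \ref{lemma:mean-sequences}. Its conditions are $a > 3c+b$ (same as above), $a+b < 1$ (since $0.8 < 1$), and $-a + 4c + b \ge 0$ (which holds with equality, $-0.8 + 0.8 = 0$). Thus Assumption \ref{assum:sequences-ms-convergence} holds and Theorem \ref{thm:mean} yields, for some $K \ge 0$,
\begin{equation*}
\EXP{f(x_{k+1})}-f^* \;\le\; \theta \,\frac{\g_k}{\mu_k^3 \delta_k}, \qquad k \ge K.
\end{equation*}
The remaining step is asymptotic: substituting $\g_k = \g_0(k+1)^{-0.8}$, $\delta_k = \delta_0$, and $\mu_k^3 = (\mu_0 2^c)^3 (k+\kappa)^{-3c} = (\mu_0 2^{0.2})^3 (k+\kappa)^{-0.6}$, the ratio becomes
\begin{equation*}
\frac{\g_k}{\mu_k^3 \delta_k} \;=\; \frac{\g_0 (k+\kappa)^{0.6}}{\delta_0 (\mu_0 2^{0.2})^3 (k+1)^{0.8}} \;=\; \mathcal{O}\!\left(k^{0.6-0.8}\right) \;=\; \mathcal{O}\!\left(k^{-1/5}\right),
\end{equation*}
where I use that $(k+\kappa)/(k+1) \to 1$ uniformly for $\kappa \in \{1,2\}$. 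Combining this with the bound above yields $\EXP{f(x_k)} - f^* = \mathcal{O}(k^{-1/5})$, as claimed.

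I do not anticipate a serious obstacle: the heavy lifting (the recursive inequality, the sequence-verification lemmas, the induction for convergence in mean) is already carried out in the preceding results. The only care needed is to check that the chosen exponents lie in both parameter regions simultaneously (which is why the endpoint equalities $a+b+c=1$ and $-a+4c+b=0$ appear), and to track the $\kappa$ shift in $\mu_k$ when passing to the asymptotic rate. I would note in a concluding remark that the boundary case $a+b+c = 1$ is tight for Assumption \ref{assum:sequences}(d), while $-a+4c+b=0$ is tight for Assumption \ref{assum:sequences-ms-convergence}(e), explaining why this particular choice of exponents is natural for extracting the $\mathcal{O}(k^{-1/5})$ rate.
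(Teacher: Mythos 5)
Your proposal is correct and follows essentially the same route as the paper's own proof: verify the exponent conditions of Lemma~\ref{lemma:a.s.sequences} and Lemma~\ref{lemma:mean-sequences} for $(a,b,c)=(0.8,0,0.2)$, invoke Theorems~\ref{thm:a.s} and~\ref{thm:mean}, and read off the rate from $\g_k/(\mu_k^3\delta_k)=\mathcal{O}(k^{-1/5})$. Your added caveat that part (a) needs $\g_0\delta_0\mu_0\leq 1$ (i.e., $\g_0$ not entirely arbitrary) is a fair observation about a point the theorem statement and the paper's proof gloss over.
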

\begin{proof}
(a) The given values of $a$, $b$, $c$ and $\delta_0$ and $\mu_0$ satisfy the conditions of 
Lemma \ref{lemma:a.s.sequences}. Therefore, all conditions of Theorem \ref{thm:a.s} are met, 
\an{and the desired statement follows.}\\
(b) The given values of $a$, $b$, $c$ and $\delta_0$ and $\mu_0$ satisfy the conditions of Lemma~\ref{lemma:mean-sequences}. Therefore, all conditions of Theorem~\ref{thm:mean} are satisfied, \an{so from \eqref{ineq:bound} we obtain}
 \[\EXP{f(x_{k+1})}-f^*\leq \theta\frac{\g_k}{\mu_k^3\delta_k}= \mathcal{O}\left(\frac{k^{-0.8}}{k^{-0.6}}\right)=\mathcal{O}(\frac{1}{\sqrt[5]{k}}).\]
\end{proof}

\begin{remark}[Computational cost] In large scale settings, a natural concern related to the implementation of algorithm \eqref{eqn:cyclic-reg-BFGS} is the computational effort in calculation of $B_k^{-1}$. An efficient technique to calculate the inverse is the Cholesky factorization where the matrix $B_k$ is stored in the form of $L_kD_kL_k^T$ and only the matrices $L_k$ and $D_k$ are updated at each iteration. This calculation can be done in $\mathcal{O}(n^2)$ operations (see \cite{Nocedal2006NO}). In large scale settings, 
the limited memory variant of the proposed algorithm can be considered which is a subject of our future work.

\end{remark}

\section{Numerical experiments}\label{sec:num}
We consider a binary classification problem studied in \cite{Yeh09} where the goal is to classify the credit card clients into credible and non-credible based on their payment records and other information. The data set is from the UCI Machine Learning repository. There are 23 features including education, marital status, history of past payment and the mount of bill statement in the past six months. We employ the logistic regression loss function given by \eqref{eqn:logistic} where
\begin{align}\label{eqn:logistic2}
\ell(u_i^Tx,v_i):=-v_i\ln(c(x,u_i))-(1-v_i)\ln(1-c(x,u_i))
\end{align}
where $c(x,u_i):=(1+\exp(-u_i^Tx))^{-1}$, $v_i \in \{0,1\}$ characterizes the class' type and $u_i \in \Real^{23}$ represents the vector of features. We use 1000 data points to run the simulations. We compare the performance of the proposed algorithm \eqref{eqn:cyclic-reg-BFGS} with that of the regularized stochastic BFGS (RES) algorithm in \cite{mokh14} and also the SA algorithm \eqref{eqn:SA}. To employ RES, since the objective function \eqref{eqn:logistic} is non-strongly convex, we assume the function is regularized as in \eqref{eqn:problem-reg} for some constant $\mu$. Fig. \ref{fig:fig1} and \ref{fig:fig2} compare the performance the three algorithms. Here we assumed that for CR-SQN, $\rho=0.9$, $\mu_0=\delta_k=1$ for any $k$, and that $\g_k$ and $\mu_k$ are given by \eqref{equ:seq} with $a=0.8$, and $c=0.2$. Also, for RES, we set $\mu=1$, $\delta=1$. In both RES and SA schemes, we use $\g_k=\g_0/(k+1)$. It is observed that in both cases, CR-SQN outperforms RES. Comparing Fig. \ref{fig:fig1} with Fig. \ref{fig:fig2}, we also observe that the SA scheme seems very sensitive to the choice of the initial stepsize $\g_0$ which is known as a main drawback of this scheme.

\begin{figure}[htb]
  \centering
  \includegraphics[scale=.30, angle=0]{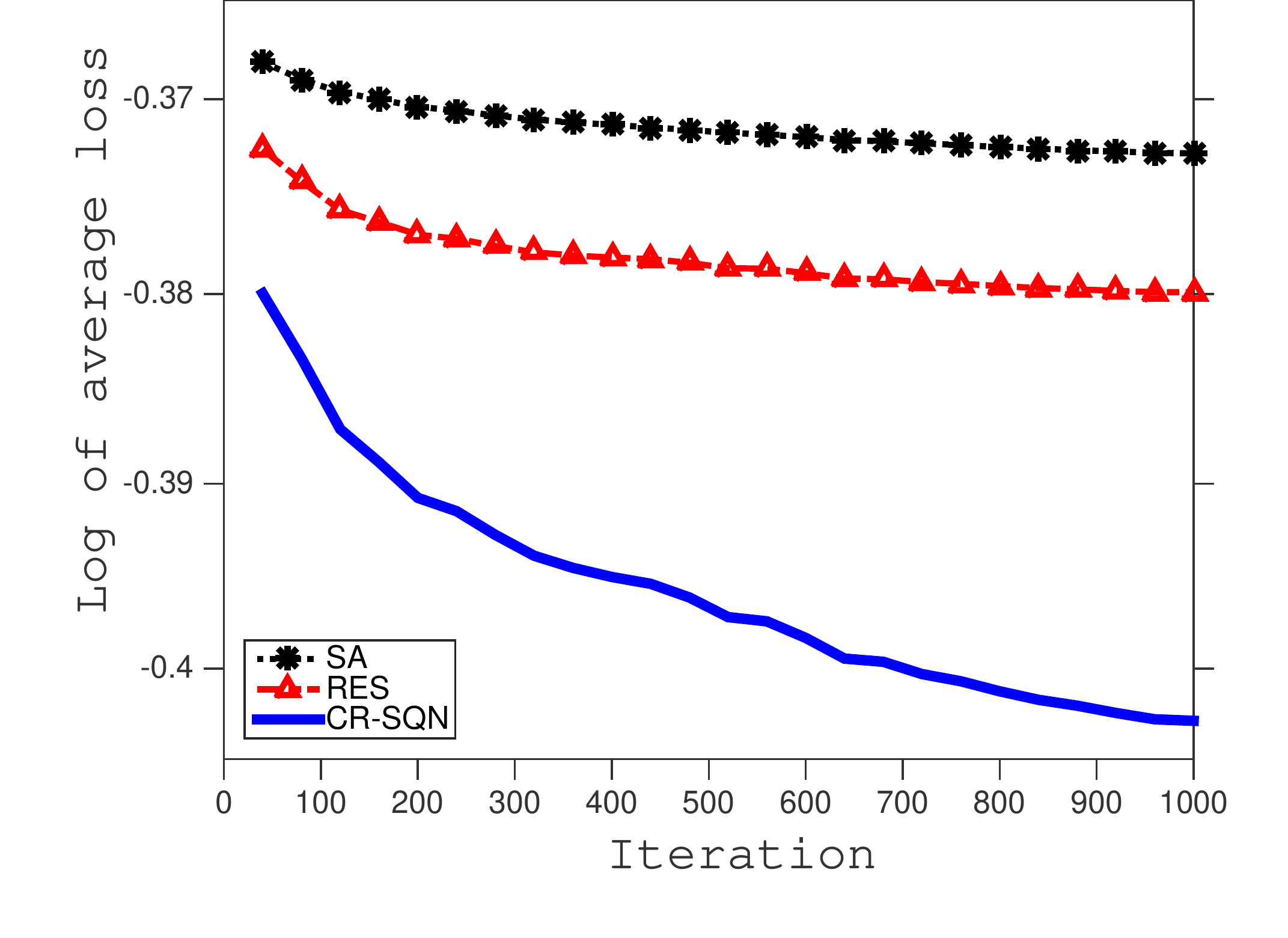}
  \vspace{-.1in}
  \caption{CR-SQN vs. RES vs. SA - $\g_0=0.01$}
  \label{fig:fig1}
\end{figure}
\begin{figure}[htb]
  \centering
  \includegraphics[scale=.30, angle=0]{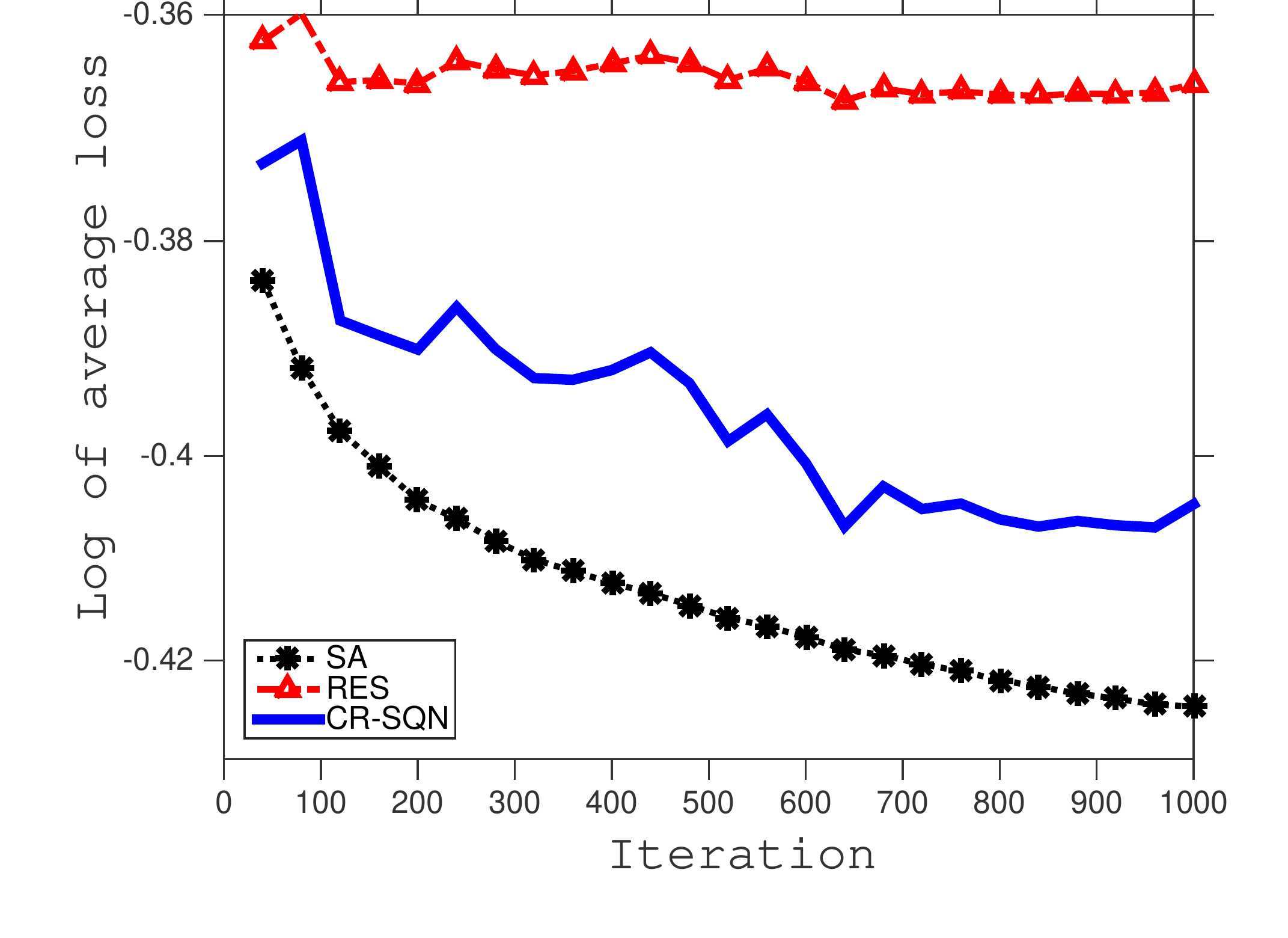}
  \vspace{-.1in}
  \caption{CR-SQN vs. RES vs. SA - $\g_0=0.1$}
  \label{fig:fig2}
\end{figure}
To perform a sensitivity analysis, we compare CR-SQN with RES and SA in Table \ref{tab:table1} and \ref{tab:table2}. In Table \ref{tab:table1}, we report the averaged loss function of CR-SQN and RES for different settings of regularization. We maintain the initial regularization parameter of CR-SQN, $\mu_0$ and the regularization parameter of RES, $\mu$ to be equal. We observe that in all settings, CR-SQN attains a lower averaged loss value. In Table \ref{tab:table2}, we observe that by changing the initial stepsize $\g_0$, except for the case $\g_0=0.1$, CR-SQN outperforms the SA scheme.

\begin{table}[htb] 
\centering 
\begin{tabular}{|c |c|c |c|} 
\hline 
  \multicolumn{2}{|c|}{CR-SQN}&  \multicolumn{2}{c|}{RES} 
\\ 
\hline
$\mu_0$ & ave. loss & $\mu$ & ave. loss \\
\hline

 $ 1$ & $0.6684$ & $1$ & $0.6839$  \\
 \hline
  $ 0.1$ & $0.6683$ & $0.1$ & $0.6949$  \\
   \hline

   $ 0.01$ & $0.8205$ & $0.01$ & $0.9017$  \\
    \hline
    $0.001$ & $4.3090$ & $0.001$ & $5.0424$ \\

\hline
\end{tabular} 
\caption{CR-SQN vs. RES: varying regularization parameter} 
\label{tab:table1} 
\end{table}

\begin{table}[htb] 
\centering 
\begin{tabular}{|c |c|c |c|} 
\hline 
  \multicolumn{2}{|c|}{CR-SQN}&  \multicolumn{2}{c|}{SA} 
\\ 
\hline
$\g_0$ & ave. loss & $\g_0$ & ave. loss \\
\hline

  $ 0.1$ & $0.6673$ & $0.1$ & $0.6540$  \\
   \hline

   $ 0.01$ & $0.6683$ & $0.01$ & $0.6888$  \\
    \hline
    $0.001$ & $0.6901$ & $0.001$ & $0.6927$ \\
\hline
 $ 0.0001$ & $0.6928$ & $0.0001$ & $0.6931$  \\
 \hline
\end{tabular} 
\caption{CR-SQN vs. SA: varying initial stepsize} 
\label{tab:table2} 
\end{table}

\section{Concluding remarks}\label{sec:conc}
To address stochastic optimization problems in the absence of strong convexity, we developed a cyclic regularized stochastic SQN method where at each iteration, the gradient mapping and the Hessian approximate matrix are regularized. To maintain the secant condition and carry out the convergence analysis, we do the \an{regularization} in a cyclic manner. Under specific update rules for stepsize and regularization parameters, our algorithm generates a sequence that converges to an optimal solution of the original problem in both almost sure and expected senses. Importantly, our scheme is characterized by a derived convergence rate in terms of the objective \an{function values.} Our preliminary empirical analysis on a binary classification problem is promising.  
\bibliographystyle{IEEEtran}
\bibliography{reference}
\end{document}